\newtheorem{theorem}{Theorem}
\newtheorem{lemma}{Lemma}
\newtheorem{proposition}{Proposition}
\newtheorem{definition}{Definition}
\newtheorem{remark}{Remark}
\newtheorem{property}{Property}
\newcommand\AC{{\rm{(AC)}}}
\newcommand\sign{{\rm{sign}}}
\newcommand\tu{{\widetilde{U}}}
\newcommand\me{\kern.3mm\mathbb{E}\kern.3mm}
\newcommand\dv{\kern.3mm{\rm{:}}\ }
\newcommand\tz{\kern.3mm{\rm{;}}\ }
\newcommand\sdots{.\kern.3mm.\kern.3mm.}
\newcommand {\beq}{\begin{equation}}
\newcommand {\eeq}{\end{equation}}
\newcommand {\barr}{\begin{eqnarray}}
\newcommand {\barn}{\begin{eqnarray*}}
\newcommand {\earr}{\end{eqnarray}}
\newcommand {\earn}{\end{eqnarray*}}
\newcommand{\bexam}{\begin{example}}
\newcommand{\eexam}{\end{example}}
\newcommand{\bpro}{\begin{property}}
\newcommand{\epro}{\end{property}}
\newcommand{\ble}{\begin{lemma}}
\newcommand{\ele}{\end{lemma}}
\newcommand{\bre}{\begin{remark}}
\newcommand{\ere}{\end{remark}}
\newcommand{\bdef}{\begin{definition}}
\newcommand{\edf}{\end{definition}}
\newcommand{\bth}{\begin{theorem}}
\newcommand{\enth}{\end{theorem}}
\newcommand{\bco}{\begin{corollary}}
\newcommand{\eco}{\end{corollary}}
\begin{document}

\begin{center}
{\bf \Large Orthogonal series
and limit theorems for canonical $U$- and $V$-statistics of
stationary connected observations}
\end{center}

%




\begin{abstract}
\noindent The limit behavior is studied for the distributions of
normalized~$U$- and $V$-statistics of an arbitrary order with
{\it canonical\/} (degenerate) kernels, based on samples of
increasing sizes from a stationary sequence of observations
satisfying~$\varphi$- or $\alpha$-mixing.  The corresponding
limit distributions are represented as infinite multilinear forms
of a centered Gaussian sequence with a known covariance matrix.

\indent{\it \bf keywords}: stationary sequence of random variables,
mixing, multiple orthogonal series, canonical $U$- and
$V$-statistics.
\end{abstract}


\section{Introduction. Preliminary results}

In~the present paper, we study the limit behavior of the
distributions  of normalized canonical~$U$- and $V$-statistics
based on stationary observations under~$\varphi$- or
$\alpha$-mixing. The approach based on a kernel representation of
the statistics under consideration as a multiple series, is quite
similar to the approach in~\cite{rubvit:BORVOL} where the
analogous results were obtained for independent observations.
First of all, introduce some definitions and notions
(see~\cite{Kolmogorov:BORVOL,KB:BORVOL}).

\begin{definition}
Let $\{\mathfrak X,\cal A\}$~be a measurable space
with a measure $\mu$. We say that the measure~$\mu$ has a~{\it countable basis} if there exists a countable family
$$
\mathfrak{A}:=\{A_n;\ n=1,2,\dots\}
$$
of measurable subsets (a countable basis of the measure~$\mu$) such that, for every
$M\in\cal{A}$ and any~$\varepsilon>0$, there is $A_{k}\in\mathfrak{A}$ such that
$$
\mu(M\Delta A_k)\le\varepsilon.
$$
\end{definition}

For example, if $\mathfrak X$~is a separable metric space and $\cal A$~is the  corresponding Borel $\sigma$-field then every $\sigma$-finite measure on~$\cal A$ has a countable basis.

Let $X_1,X_2,\dots$~be a stationary sequence of random variables
defined on a probability space $(\Omega,\cal F,\mathbb P)$ and
taking values in an  arbitrary measurable space~$\{\mathfrak
X,\cal A\}$. Denote by~$F$ the distribution of~$X_{1}$. In the
sequel, we consider only the distributions on~$\cal A$ which have
a countable basis. In addition to the stationary sequence
introduced above,  we need an auxiliary sequence~$\{X_i^{*}\}$
consisting of independent copies  of~$X_1$. Denote by
~$L_2(\mathfrak{X}^m,F^m)$ the space of measurable
functions~$f(t_1,\dots,t_m)$ defined on the corresponding
Cartesian power of the space~$\{\mathfrak X,\cal A\}$ with the
corresponding product-measure and satisfying the condition~$\me
f^2(X_1^*,\dots,X_m^*)<\infty$.

\begin{definition}                      
A function $f(t_1,\dots,t_m)\in L_2(\mathfrak{X}^m,F^m)$
is called  {\it canonical\/} (or {\it degenerate}) if
\begin{equation}\label{deg:BORVOL}                       
\me_{X^*_k} f(X_1^*,\dots,X_m^*)=0 \,\,\,a.\,s.
\end{equation}
for every $k$, where $\me_{X^*_k}$ is the conditional expectation given
the random variables $\{X^*_i; i\neq k\}$.
\end{definition}

Define a {\it canonical} Von Mises statistic by the formula
\begin{equation}\label{Mis:BORVOL}                         
V_n\equiv V_n(f):=n^{-m/2}
\sum_{1\leq j_1,\dots,j_m\leq n}f(X_{j_1},\dots,X_{j_m}),
\end{equation}
where the function $f(t_1,\dots,t_m)$ (the
so-called \textit{kernel} of the statistic) is canonical. For
independent~$\{X_i\}$, such statistics called  {\it canonical
$V$-statistics} as well, are studied during last sixty years (see
the reference and examples of such
statistics~in~\cite{KB:BORVOL}). For the first time, some limit
theorems in the bivariate case were obtained in
\cite{Mises:BORVOL,Hf:BORVOL}. In addition to $V$-statistics, the
so-called $U$-statistics were studied as well:
\begin{equation}\label{Ustat:BORVOL}                 
U_n\equiv U_n(f):={n^{-m/2}}
\sum_{1\leq i_1\neq\cdots\,\neq i_m\leq n}f(X_{i_1},\dots,X_{i_m})
\end{equation}
or
\begin{equation}\label{Ustat0:BORVOL}                
U^0_n:={n^{-{m}/{2}}}
\sum_{1\leq i_1<\cdots\,<i_m\leq n}f_0(X_{i_1},\dots,X_{i_m}).
\end{equation}
Notice that, in addition, the kernel~$f_0$
in~(\ref{Ustat0:BORVOL}) is assumed to be symmetric (or invariant
with respect to all permutations of the variables).

The main difference of  $U$-statistics from $V$-statistics is
that, in the region of summation of the corresponding multiple
sums~(\ref{Ustat:BORVOL}) and~(\ref{Ustat0:BORVOL}), the so-called
{\it diagonal subspaces} are absent, i.~e., all the subscripts are
pairwise distinct. If the distribution~$F$ of the random
variable~$X_1$ has no atoms then setting the kernel to be zero on
all the diagonal subspaces, we can easily reduce $U$-statistics
in~(\ref{Ustat:BORVOL}) to $V$-statistics~(\ref{Mis:BORVOL}).
Moreover, it is easy to see that formulas~(\ref{Ustat:BORVOL})
and~(\ref{Ustat0:BORVOL}) for $U$-statistics are equivalent: If
in~(\ref{Ustat0:BORVOL}) we set $$ f_0(t_1,\dots,t_m):=\sum
f(t_{i_1},\dots,t_{i_m}), $$ 
where the sum is
taken over all permutations~$i_1,\dots,i_m$ of the
numbers~$1,\dots,m$, then we reduce the representation
in~(\ref{Ustat:BORVOL}) to that in~(\ref{Ustat0:BORVOL}).

Notice also that any $U$-statistic is represented as a finite
linear combination of canonical $U$-statistics of orders from~1
to~$m$ (called a {\it H{\"o}ffding decomposition\/},
see~\cite{KB:BORVOL}). This fact allows us to reduce an asymptotic
analysis of arbitrary $U$-statistics to that for canonical ones.

If the distribution~$F$ is arbitrary (in particular, it contains
atoms) that $U$-statistics in~(\ref{Ustat:BORVOL})
or~(\ref{Ustat0:BORVOL}) admit  recurrent representations as
linear combinations of canonical $V$-statistics and
$U$-statistics of smaller orders. In other words, we can
represent any canonical $U$-statistic as a linear combination of
canonical $V$-statistics. This is a key remark to study the limit
behavior of the distributions of $U$-statistics. In this
connection, note an important role of the $V$-statistics with
splitting kernels: 
\begin{equation}\label{product:BORVOL}                
f(t_1,\dots,t_m)=h_{1}(t_1)h_{2}(t_2)\cdots\,h_{m}(t_m)
\end{equation}
since, in this case, the corresponding $V$-statistic is represented  in the form
$$
V_n=\frac{1}{\sqrt{n}}
    \sum_{i=1}^nh_1(X_i)\cdots\,
    \frac{1}{\sqrt{n}}
    \sum_{i=1}^nh_m(X_i),
$$
where $\me h_k(X_1)=0$ and~$\me h^2_k(X_1)<\infty$.
So, under some dependency conditions of the random variables~$\{X_j\}$, one can apply the multivariate central limit theorem which describes the weak limit of  the $V$-statistic under consideration as the product
 $\prod_{j=1}^{m}\rho_j$, where $\{\rho_j;\ j\le m\}$~are centered Gaussian random variables with the covariance matrix
$$
\me\rho_{i}\rho_{j}=
\me h_i(X_1)h_j(X_1)
+\sum_{k=1}^{\infty}
\big(\me h_i(X_1)h_j(X_{k+1})+\me h_j(X_1)h_i(X_{k+1})
\big)
$$
provided that the series on the right-hand side of this equality exists.
 It is clear that these arguments are not essentially changed when the kernel of a $V$-statistic can be represented as a linear combination  (maybe infinite) of splitting kernels  (see the proof of Theorem~2). The same argument may be applied to $U$-statistics with such kernels due to the above-mentioned connection of~$U$- and $V$-statistics. This approach was applied to prove some well-known results on asymptotic analysis of
$U$-statistics of independent observations~$\{X_j\}$.

In this connection, recall some classic results connected with
expansion of a canonical function into a multiple orthogonal
series with respect to an orthogonal basis of the Hilbert
space~$L_2(\mathfrak{X},F)$. Since the distribution~$F$ has a
countable basis, the Hilbert space~$L_2(\mathfrak{X},F)$ is
separable. It means that, in this space, there exists a countable
orthonormal basis. Put~$e_0(t)\equiv 1$. Using the Gram~---
Schmidt orthogonalization~\cite{Kolmogorov:BORVOL}, one can
construct an orthonormal basis in~$L_2(\mathfrak{X},F)$ containing
the constant function $e_0(t)$. Denote
by~$\big\{e_i(t)\big\}_{i\geq 0}$ such basis. Then~$\me
e_i(X_1)=0$ for every~$i\geq 1$ due to the orthogonality of all
the other basis elements to the function~$e_0(t)$. The normalizing
condition means that~$\me e^2_i(X_1)=1$ for all~$i\geq 1$. Notice
that the collection of functions $$
\big\{e_{i_1}(t_1)e_{i_2}(t_2)\cdots\,e_{i_m}(t_m); \
i_1,i_2,\dots,i_m=0,1,\dots \big\} $$ is an orthonormal basis in
the Hilbert space $L_2(\mathfrak{X}^m,F^m)$ (for example,
see~\cite{Kolmogorov:BORVOL}).

Thus, one can represent the kernel $f(t_1,\dots,t_m)$ of the
statistics under consideration as a multiple orthogonal series
in~$L_2(\mathfrak{X}^m,F^m)$:
\begin{equation}\label{kernel1:BORVOL}               
f(t_1,\dots,t_m)=
\sum_{i_1,\dots,i_m=0}^{\infty}
f_{i_1,\dots,i_m}
e_{i_1}(t_1)\cdots\,e_{i_m}(t_m),
\end{equation}
where the series on the right-hand side of equality~(\ref{kernel1:BORVOL}) converges in the norm of~$L_2(\mathfrak{X}^m,F^m)$.
Moreover, if the coefficients~$\{f_{i_1,\dots,i_m}\}$
are absolutely summable then, due to the B. Levi theorem and the simple estimate
${\bf E}\big\vert e_{i_1}(X^*_1)\cdots\,e_{i_m}(X^*_m)\big\vert\le 1$,
the series in~(\ref{kernel1:BORVOL}) converges almost surely with respect to the distribution ~$F^m$ of the vector $(X^*_1,\dots,X^*_m)$.

Consider the case $m=2$ and the integral linear operator with a symmetric
kernel $f\in L_2(\mathfrak{X}^2,F^2)$ mapping the space~$L_2(\mathfrak{X},F)$ into itself. Since this linear operator is completely continuous and self-conjugate, in the separable Hilbert space~$L_2(\mathfrak{X},F)$, there exists an orthonormal basis
consisting of eigenvectors of this integral operator and, for this basis, representation~(\ref{kernel1:BORVOL}) for  $m=2$ is valid. Multiply by an arbitrary element~$e_k(t_2)$ the both sides of~(\ref{kernel1:BORVOL}) and integrate these modified parts with respect to the distribution~$F(dt_2)$. Taking orthogonality of the basis elements into account we obtain the new identity
$$
\lambda_ke_k(t_1)=\sum_{i=0}^{\infty}f_{i,k}e_{i}(t_1),
$$
where $\lambda_k$~is the corresponding eigenvalue. From here it immediately follows that~$f_{k,k}=\lambda_k$ and~$f_{i,k}=0$ for~$i\neq k$.
Therefore, for this basis in the case~$m=2$, formula (\ref{kernel1:BORVOL})
has the form
\begin{equation}\label{eagl:BORVOL}                   
f(t_1,t_2)=
\sum_{k=0}^{\infty}\lambda_ke_k(t_1)e_k(t_2),
\end{equation}
which was repeatedly employed by many authors.

Notice also the following property of canonical kernels.

\begin{proposition}                  
If $f(t_1,\dots,t_m)$~is a canonical kernel then $e_0(t)$ is absent in expansion~$(\ref{kernel1:BORVOL})$, i.~e., expansion~$(\ref{kernel1:BORVOL})$
has the form
\begin{equation}\label{kernel:BORVOL}                 
f(t_1,\dots,t_m)=
\sum_{i_1,\dots,i_m=1}^{\infty}f_{i_1,\dots,i_m}e_{i_1}(t_1)
\cdots\,e_{i_m}(t_m).
\end{equation}
\end{proposition}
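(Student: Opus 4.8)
The plan is to work directly with the Fourier coefficients of $f$ relative to the product basis. Since $\{X_i^*\}$ consists of independent copies of $X_1\sim F$, the orthonormality of the system $\big\{e_{i_1}(t_1)\cdots e_{i_m}(t_m)\big\}$ in $L_2(\mathfrak{X}^m,F^m)$ yields the usual formula for the coefficients,
\[
f_{i_1,\dots,i_m}=\me\,f(X_1^*,\dots,X_m^*)\,e_{i_1}(X_1^*)\cdots e_{i_m}(X_m^*).
\]
First I would record that this expectation is well defined: $f\in L_2(\mathfrak{X}^m,F^m)$, and by independence $\prod_{j=1}^m e_{i_j}(X_j^*)\in L_2(\mathfrak{X}^m,F^m)$ with second moment $\prod_j \me e_{i_j}^2(X_1)=1$ (using $e_0\equiv1$ and $\me e_i^2(X_1)=1$ for $i\ge1$), so by the Cauchy--Schwarz inequality the product $f\cdot\prod_j e_{i_j}(X_j^*)$ is integrable and $|f_{i_1,\dots,i_m}|\le\|f\|_{L_2}$.

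Next, suppose some index vanishes, say $i_k=0$. Then $e_{i_k}(X_k^*)=e_0(X_k^*)\equiv1$, so
\[
f_{i_1,\dots,i_m}=\me\Big(f(X_1^*,\dots,X_m^*)\prod_{j\neq k}e_{i_j}(X_j^*)\Big).
\]
Now I would condition on $\{X_j^*;\ j\neq k\}$ and apply the tower property. The factor $\prod_{j\neq k}e_{i_j}(X_j^*)$ is measurable with respect to this $\sigma$-field, hence it factors out of the conditional expectation, leaving $\me_{X_k^*}f(X_1^*,\dots,X_m^*)$, which equals $0$ almost surely by the canonicity condition~(\ref{deg:BORVOL}). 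Therefore the inner conditional expectation is $0$ a.s., and taking the outer expectation gives $f_{i_1,\dots,i_m}=0$.

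Since this argument applies whenever at least one of $i_1,\dots,i_m$ equals $0$, every surviving term in~(\ref{kernel1:BORVOL}) has all indices $\ge1$, which is precisely the asserted form~(\ref{kernel:BORVOL}). The only step that deserves a word of care is the interchange of conditioning and the product --- i.e.\ justifying that $\prod_{j\neq k}e_{i_j}(X_j^*)$ truly pulls out of $\me[\,\cdot\mid\{X_j^*;\,j\neq k\}]$ --- but this is the standard ``taking out what is known'' property of conditional expectation, legitimate here thanks to the integrability established above. The remaining verifications are routine.
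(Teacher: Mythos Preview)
Your argument is correct. The key step --- pulling the $\sigma(X_j^*:j\ne k)$-measurable factor $\prod_{j\ne k}e_{i_j}(X_j^*)$ outside the conditional expectation --- is justified exactly as you say: $f(X_1^*,\dots,X_m^*)\in L_2\subset L_1$ and the product with $\prod_{j\ne k}e_{i_j}(X_j^*)$ lies in $L_1$ by Cauchy--Schwarz, which is the standard hypothesis for ``taking out what is known.'' After that, the degeneracy condition~(\ref{deg:BORVOL}) kills the inner conditional expectation and hence the coefficient.

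The paper's proof reaches the same conclusion by a different route. Rather than computing a single coefficient via the tower property, it works with the partial sums $f_N$, observes that $\me_{X_k^*}f_N$ is precisely the sub-sum over indices with $i_k=0$, and then bounds $\me\big(\me_{X_k^*}f_N\big)^2=\me\big(\me_{X_k^*}(f_N-f)\big)^2\le\|f_N-f\|_{L_2}^2\to0$ using the $L_2$-contractivity of conditional expectation. Since by orthonormality $\me\big(\me_{X_k^*}f_N\big)^2=\sum f_{i_1,\dots,i_{k-1},0,i_{k+1},\dots,i_m}^2$, every such coefficient must vanish. This avoids invoking the pull-out property for possibly unbounded factors, at the cost of an extra limiting argument. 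Your approach is more direct and arguably cleaner once the integrability is checked; the paper's approach stays entirely within $L_2$ operator bounds. Both are short and valid.
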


\begin{proof}
Denote
$$f_N(t_1,...,t_m)=\sum_{i_1, \dots, i_m=0}^{N}f_{i_1\dots i_m}e_{i_1}(t_1) \dots e_{i_m}(t_m).$$
It is clear that
$$\mathbb{E}_{X^*_k}f_N(X^*_1,...,X^*_m)=\sum_{i_1, \dots, i_m=0}^{N}f_{i_1,\dots,i_{k-1},0,i_{k+1},\dots,i_m}e_{i_1}(X^*_1) \dots e_{i_{k-1}}(X^*_{k-1})e_{i_{k+1}}(X^*_{k+1}),\dots,e_{i_m}(X^*_m).$$
Taking  degeneracy condition (\ref{deg:BORVOL}) into account, we obtain, as $N\to\infty$,
$$\mathbb{E}(\mathbb{E}_{X^*_k}f_N(X^*_1,...,X^*_m))^2=
\mathbb{E}\big(\mathbb{E}_{X^*_k}(f(X^*_1,...,X^*_m)-f_N(X^*_1,...,X^*_m))\big)^2$$
$$\leq\mathbb{E}\mathbb{E}_{X^*_k}\big(f(X^*_1,...,X^*_m)-f_N(X^*_1,...,X^*_m)\big)^2=
\mathbb{E}\big(f(X^*_1,...,X^*_m)-f_N(X^*_1,...,X^*_m)\big)^2\rightarrow 0.$$
In other words,  as $N\rightarrow\infty$,
\begin{align*}
\mathbb{E}\Big(\sum_{i_1, \dots, i_m=0}^{N}f_{i_1,\dots,i_{k-1},0,i_{k+1},\dots,i_m}e_{i_1}(X^*_1) \dots e_{i_1}(X^*_{k-1})e_{i_1}(X^*_{k+1}),\dots,e_{i_m}(X^*_m)\Big)^2\\
=\sum_{i_1, \dots, i_m=0}^{N}f_{i_1,\dots,i_{k-1},0,i_{k+1},\dots,i_m}^2\rightarrow 0
\end{align*}
due to the orthonormality of the basis functions. The last relation means that all the coefficients  $f_{i_1,\dots,i_{k-1},0,i_{k+1},\dots,i_m}$ vanish for all $k=1,\dots, m$ and all the subscripts $i_1,\dots,i_{k-1},i_{k+1},\dots,i_m$.
%
%
\end{proof}

Notice also that if the kernel~$f(t_1,t_2)$ in~(\ref{eagl:BORVOL})
is canonical then the constant function~$e_0(t)$ is the eigenfunction corresponding to the eigenvalue~$\lambda_0=0$ of the integral operator. So, in this case, the summation in~(\ref{eagl:BORVOL}) starts with~$k=1$.

Thus, after replacement of the vector~$(t_1,\dots,t_m)$
by the independent observations~$(X^*_1,\dots,X^*_m)$, the partial sums of the series on the right-hand side of~(\ref{kernel:BORVOL}) \big(or of~(\ref{eagl:BORVOL}) in the case~$m=2$\big) mean-square converge to the random variable
$f(X^*_1,\dots,X^*_m)$ and hence they converge in distribution.
However, in the present paper, we deal with dependent observations
for which  this property in general is not valid.

\section{The main results for weakly dependent observations}

In the present paper, we study stationary sequences~$\{X_j\}$ satisfying certain mixing conditions. Recall the definitions of the most popular mixing conditions.
For~$j\le k$, denote by~$\mathfrak{M}^k_{j}$
the $\sigma$-field of all events generated by the random variables
$X_j,\dots,X_k$.

\setcounter{definition}{2}
\begin{definition}                    
A  sequence  $X_1,X_2,\dots$ satisfies
 $\alpha$-{\it mixing\/} (or {\it strong mixing}) if
$$
\alpha(i):=
\sup_{k\ge 1}\,
\sup_{A\,\in\,\mathfrak{M}^k_{1},\,
      B\,\in\,\mathfrak{M}^{\infty}_{k+i}}
\big\vert\mathbb{P}(AB)-\mathbb{P}(A)\mathbb{P}(B)
\big\vert\rightarrow 0\quad\mbox{as}\quad  i\rightarrow\infty.
$$
\end{definition}

\begin{definition}                    
A  sequence $X_1,\ X_2,\dots$ satisfies
$\varphi$-{\it mixing\/} (or
{\it uniformly strong mixing})
if
$$
\varphi(i):=
\sup_{k\ge 1}\,
\sup_{A\,\in\,\mathfrak{M}^k_{1},\,
      B\,\in\,\mathfrak{M}^{\infty}_{k+i},\,
              \mathbb{P}(A)>0}
\frac{\big\vert\mathbb{P}(AB)-
               \mathbb{P}(A)
               \mathbb{P}(B)
      \big\vert}
     {\mathbb{P}(A)}
\rightarrow 0\quad\mbox{as}\quad  i\rightarrow\infty.
$$
\end{definition}

\begin{definition}                    
A  sequence $X_1,X_2,\dots$ satisfies
$\psi$-{\it mixing} if
$$
\psi(i):=\sup_{k\ge 1}\,
         \sup_{A\,\in\,\mathfrak{M}^k_{1},\,
               B\,\in\,\mathfrak{M}^{\infty}_{k+i},\,
                \mathbb{P}(A)
                \mathbb{P}(B)>0}
\frac{\big\vert\mathbb{P}(AB)-
               \mathbb{P}(A)
               \mathbb{P}(B)
      \big\vert}
     {\mathbb{P}(A)\mathbb{P}(B)}
\rightarrow 0\quad\mbox{as}\quad  i\rightarrow\infty.
$$
\end{definition}

It is clear that the sequences~$\big\{\alpha(i)\big\}$,
$\big\{\varphi(i)\big\}$, and~$\big\{\psi(i)\big\}$ are nondecreasing and
$\psi$-mixing is stronger than
$\varphi$-mixing which in turn implies $\alpha$-mixing.

In the sequel, in the case of $\varphi$-mixing, we assume that
\begin{equation}\label{fi:BORVOL}                      
\sum_{k=1}^{\infty}\varphi^{1/2}(k)<\infty.
\end{equation}
Note that this known condition provides the cental limit theorem
for the corresponding stationary sequences
(for example, see~\cite{Billingsley:BORVOL}).

Introduce also the following restriction on finite-dimensional distributions
of the stationary sequence~$\{X_i\}$.

{\bf (AC)} {\sl For every collection of pairwise distinct subscripts
$(j_1,\dots,j_m)$, the distribution of $(X_{j_1},\dots,X_{j_m})$ is absolutely continuous with respect to the distribution of $(X^*_1,\dots,X^*_m)$}.

Notice that this restriction will be nontrivial only for sequences
under~$\alpha$- or $\varphi$-mixing because, in the case of $\psi$-mixing,
by induction on~$m$,
from Definition~5 we can easily deduce the inequality
$$
P(X_{j_1}\in A_1,\dots,X_{j_m}\in A_m)
\leq\big(1+\psi(1)
    \big)^m\prod_{k=1}^mP(X_{k}\in A_k)
$$
which is valid for every collection of Borel subsets
$(A_1,\sdots,A_m)$ and for every pairwise distinct subscripts
$(j_1,\dots,j_m)$.  From here condition~$\AC$ immediately follows.

\begin{remark}                            
As was mention before, the condition
\begin{equation}\label{abs:BORVOL}                  
\sum_{i_1,\dots,i_m=1}^{\infty}|f_{i_1,\dots,i_m}|<\infty
\end{equation}
implies convergence of the series in~(\ref{kernel:BORVOL}) almost surely
with respect to the distribution of the vector~$(X^*_{1},\dots,X^*_{m})$. So, under condition~$\AC$, this convergence is valid almost surely with respect to the distribution of the random vector~$(X_{j_1},\dots,X_{j_m})$. In other words,
if condition~$\AC$ is fulfilled then, for any pairwise distinct subscripts ${j_1},\dots,{j_m}$, we can substitute the random variables $X_{j_1},\dots,X_{j_m}$ for $t_1,\dots,t_m$ in~(\ref{kernel:BORVOL}).
\end{remark}

\begin{remark}                           
Under restriction~(\ref{abs:BORVOL}), one can sometimes obtain the above-mentioned   multiple series expansions without any restrictions like~$\AC$ on joint distributions of the initial stationary sequence. For example, if the kernel
$f(t_1,\dots,t_m)$~is continuous in~$\mathbb R^m$  and all the basis
functions $e_k(t)$ are continuous and bounded uniformly in~$k$ then, under condition~(\ref{abs:BORVOL}), equality~(\ref{kernel:BORVOL}) is transformed into the {\it
identity\/} in~$\mathbb R^m$ (see the proof of Theorem~2 below). Therefore, in this identity, one can substitute {\it arbitrarily dependent\/} random variables $X_{j_1},\dots,X_{j_m}$ (in particular, for coinciding subscripts
$j_k$) for the arguments
$t_1,\dots,t_m$.
\end{remark}

As an example of expansion~(\ref{kernel:BORVOL}) which is everywhere valid, consider the following symmetric kernel canonical with respect to the $[-1,1]$-uniform distribution:
$$
f^*(t,s):=\sign(ts)\min\big\{|t|,|s|
                       \big\}.
$$
This is the covariance function of the Gaussian process~$\sign(t)W\big(|t|\big)$ defined on~$[-1,1]$, where $W(t)$~is a standard Wiener process on the positive half-line. The eigenvalues  of the corresponding covariance operator, i.~e., of the integral operator with the kernel~$f^*(t,s)$ in the Hilbert space~$L_2[-1,1]$, are calculated by the same formula as that for the eigenvalues of the covariance operator of a standard Wiener process:
 $\lambda_k=\big(\pi(k-1/2)\big)^{-2}$,
$k=1,2,\dots$ (for example, see~\cite[�.~8.6]{GS:BORVOL}).
The corresponding eigenfunctions in~(\ref{kernel:BORVOL}) which form an orthonormal basis (together with the constant function $e_0(t)\equiv 1$)
in~$L_2[-1,1]$ are calculated by the formula $e_k(t)=\sqrt{2}\sin\big(\pi(k-1/2)t\big)$,
$k=1,2,\dots$.

It is easy to see that this example satisfies the conditions of Remark~2.
So, for arbitrarily dependent random variables~$X_1$ and~$X_2$ with the~$[-1,1]$-uniform distribution of each~$X_i$, the random variable~$f^*(X_1,X_2)$ can be represented in such a way:
\begin{equation}\label{min:BORVOL}                      
f^*(X_1,X_2)=
\sum_{k=1}^{\infty}
\frac{2\sin\big(\pi(k-1/2)X_1
           \big)
       \sin\big(\pi(k-1/2)X_2
           \big)}
     {\pi^2(k-1/2)^2},
\end{equation}
and the series absolutely converges  {\it everywhere}.

Under $\varphi$-mixing for the sequence~$\{X_i\}$
but without the restrictions mentioned in Remarks~1 and~2,
in general one cannot use expansions~(\ref{eagl:BORVOL})
or~(\ref{kernel:BORVOL}).
Such mistake is contained in~\cite{eaglson:BORVOL} (see also~\cite{KB:BORVOL}),
where it is claimed that, in the case of $\varphi$-mixing stationary observations for $m=2$, under condition~(\ref{fi:BORVOL}) only but without any restrictions like~$\AC$ and the regularity condition mentioned in Remark~2, the following assertion is valid:
\begin{equation} \label{eagleresult:BORVOL}            
U_n\stackrel{d}\rightarrow
\sum_{k=1}^{\infty}\lambda_k(\tau_k^2-1),
\end{equation}
where $\{\lambda_k\}$~are the eigenvalues of the integral operator with the symmetric kernel~$f(t_1,t_2)$, which are assumed to be summable
\big(i.~e., under condition~(10)\big), and $\{\tau_k\}$~is a Gaussian sequence of centered random variables with the covariances
\begin{equation}\label{eagleresult2:BORVOL}             
\me\tau_k\tau_l=
\me e_k(X_1)e_l(X_{1})+
\sum_{j=1}^{\infty}\big[\me e_k(X_1)e_l(X_{j+1})+
                        \me e_l(X_1)e_k(X_{j+1})
                   \big],
\end{equation}
where $\big\{e_k(t)\big\}$~are the eigenfunctions corresponding to the eigenvalues~$\{\lambda_k\}$ and forming  an orthonormal basis
in~$L_2(\mathfrak{X},F)$ \big(actually, the first summand on the right-hand side~(\ref{eagleresult2:BORVOL}) is the Kronecker symbol $\delta_{k,l}$\big).
   In~(12) and in the sequel, we admit degeneracy of some random variables~$\tau_i$. In other words, we add to the class of Gaussian distributions the all weak limits when the variance tends to zero.
Actually, the similar agreement is contained in~\cite{Billingsley:BORVOL}.
To prove relation~(12) in~\cite{eaglson:BORVOL}
the author use the expansion of the kernel~$f(t_1,t_2)$ in series~(\ref{eagl:BORVOL}) with respect to the basis~$\big\{e_k(t)\big\}$. Due to the above-mentioned arguments, we could now substitute the independent observations~$X^*_i$ and~$X^*_j$ for the variables~$t_1$ and~$t_2$. The same is true for a pair~$X_i$
and~$X_j$ from a stationary sequence satisfying condition~$\AC$.
However, in~\cite{eaglson:BORVOL}, the author substituted a pair~$X_i$
and~$X_j$ from an arbitrary stationary sequence under $\varphi$-mixing with restriction~(\ref{fi:BORVOL}) only. But under this replacement, the above-mentioned equalities may be not fulfilled with a nonzero probability. Moreover,
in this case,  the limit law in~(\ref{eagleresult:BORVOL}) may change the form.
The idea of constructing examples of such a kind is very simple: We need to construct a stationary sequence~$\{X_i\}$ with a non-atomic marginal  distribution, such that its elements~$X_i$ and~$X_j$ coincide with nonzero probability for some subscripts~$i\neq j$.  We then can change the values of ~$f$ on diagonal subspaces to break the above-mentioned identities with a nonzero probability when we replace the arguments~$X^*_i$
and~$X^*_j$ of the kernel with the dependent pair~$X_i$ and~$X_j$. The corresponding construction is contained in the proof of the following assertion.

\setcounter{proposition}{1}
\begin{proposition}                        
There exist a stationary sequence\,$\{\hskip-1pt X_i\}$ and a canonical kernel~$f(t_1,t_2)$ satisfying all the restrictions in~{\rm{\cite{eaglson:BORVOL}}}. However, under substituting the observations~$X_1$ and~$X_2$ for~$t_1$ and~$t_2$ respectively, the series in~$(\ref{eagl:BORVOL})$ does not coincide with the kernel. Moreover, the weak limit for the distributions of the $U$-statistics differs  from~$(\ref{eagleresult:BORVOL})$.
\end{proposition}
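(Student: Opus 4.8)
The plan is to make rigorous the counterexample sketched above. I would work on $\mathfrak{X}=[-1,1]$ with $F$ the uniform law and realize $\{X_i\}$ as a ``resampling'' Markov chain: let $(\varepsilon_i)_{i\ge2}$ be i.i.d.\ Bernoulli$(1/2)$ and $(\xi_i)_{i\ge1}$ i.i.d.\ uniform on $[-1,1]$, all independent, put $X_1:=\xi_1$, and for $i\ge2$ set $X_i:=X_{i-1}$ if $\varepsilon_i=1$ and $X_i:=\xi_i$ if $\varepsilon_i=0$. The uniform law is invariant, so $\{X_i\}$ is stationary with non-atomic marginal, and $\mathbb{P}(X_1=X_2)=\mathbb{P}(\varepsilon_2=1)=1/2>0$. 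To bound the mixing rate, note that, given $X_k=x$, either the event $E:=\{\varepsilon_{k+1}=\dots=\varepsilon_{k+i}=1\}$ (of probability $2^{-i}$, independent of $X_k$) occurs and the chain continues deterministically from $x$, or a resampling happens in $(k,k+i]$, after which $(X_{k+i},X_{k+i+1},\dots)$ has the stationary law, independent of $x$; hence $|\mathbb{P}(B\mid X_k)-\mathbb{P}(B)|\le2^{-i}$ a.s.\ for $B\in\mathfrak{M}^{\infty}_{k+i}$, so by the Markov property $|\mathbb{P}(AB)-\mathbb{P}(A)\mathbb{P}(B)|=|\me[\mathbf{1}_A(\mathbb{P}(B\mid X_k)-\mathbb{P}(B))]|\le2^{-i}\mathbb{P}(A)$ for $A\in\mathfrak{M}^{k}_{1}$, i.e.\ $\varphi(i)\le2^{-i}$ and~(\ref{fi:BORVOL}) holds.

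Next I would take for the kernel the diagonal modification of the example kernel, $$f(t,s):=f^*(t,s)+\mathbf{1}\{t=s\}=\sign(ts)\min\{|t|,|s|\}+\mathbf{1}\{t=s\}.$$ Since the diagonal is $F^2$-null, $f=f^*$ as elements of $L_2(\mathfrak{X}^2,F^2)$, so $f$ is a symmetric canonical kernel with the same summable eigenvalues $\lambda_k=(\pi(k-1/2))^{-2}$ and eigenfunctions $e_k(t)=\sqrt{2}\sin(\pi(k-1/2)t)$; thus $(f,\{X_i\})$ satisfies every hypothesis of~\cite{eaglson:BORVOL} (in particular~(\ref{abs:BORVOL}) in the eigenbasis). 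However, by~(\ref{min:BORVOL}), which holds \emph{everywhere}, $\sum_{k\ge1}\lambda_k e_k(X_1)e_k(X_2)=f^*(X_1,X_2)$ a.s., while $f(X_1,X_2)=f^*(X_1,X_2)+\mathbf{1}\{X_1=X_2\}$; as $\mathbb{P}(X_1=X_2)=1/2$, the series in~(\ref{eagl:BORVOL}) disagrees with the kernel $f(X_1,X_2)$ on an event of probability $1/2$.

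Finally I would identify the limit law. Write $U_n(f)=U_n(f^*)+R_n$ with $R_n:=n^{-1}\#\{(i,j):1\le i\ne j\le n,\ X_i=X_j\}$. Since $f^*$ meets the regularity conditions of Remark~2, expansion~(\ref{min:BORVOL}) is an identity in the observations, and the multivariate central limit theorem argument described in the Introduction (cf.\ the proof of Theorem~2) yields $V_n(f^*)=\sum_{k\ge1}\lambda_k\big(n^{-1/2}\sum_{i\le n}e_k(X_i)\big)^2\stackrel{d}\rightarrow\sum_{k\ge1}\lambda_k\tau_k^2$, where $\{\tau_k\}$ is centered Gaussian with covariance~(\ref{eagleresult2:BORVOL}) (for the present chain one computes $\me\tau_k\tau_l=3\,\delta_{k,l}$); since by the ergodic theorem $n^{-1}\sum_{i\le n}f^*(X_i,X_i)=n^{-1}\sum_{i\le n}|X_i|\to1/2=\sum_{k\ge1}\lambda_k$, this gives $U_n(f^*)=V_n(f^*)-n^{-1}\sum_{i\le n}f^*(X_i,X_i)\stackrel{d}\rightarrow\sum_{k\ge1}\lambda_k(\tau_k^2-1)$, precisely the right-hand side of~(\ref{eagleresult:BORVOL}). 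For $R_n$, I would split $1,\dots,n$ into the maximal runs on which $\{X_i\}$ is constant; these runs are i.i.d.\ of geometric length $L$ with $\me L=2$ and $\me[L(L-1)]=4$, a run of length $\ell$ contributing $\ell(\ell-1)$ to the numerator, whence the renewal/ergodic theorem gives $R_n\to\me[L(L-1)]/\me[L]=2$ a.s.\ (the two boundary runs being negligible). By Slutsky's theorem $U_n(f)\stackrel{d}\rightarrow\sum_{k\ge1}\lambda_k(\tau_k^2-1)+2$, which is not the distribution in~(\ref{eagleresult:BORVOL}). The main obstacle will be this last paragraph: one must legitimately invoke the limit theorem for the \emph{regular} kernel $f^*$ on a sequence that violates~$\AC$, and control the boundary runs in the renewal computation for $R_n$; by contrast the construction in the first two paragraphs is the conceptual heart of the argument.
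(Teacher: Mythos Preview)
Your argument is correct, but it follows a genuinely different route from the paper's. The paper builds a \emph{1-dependent} sequence $X_i=Y_{i+\xi_i}$ (with $\{Y_i\}$ i.i.d.\ uniform on $[-1,1]$ and $\{\xi_i\}$ i.i.d.\ symmetric Bernoulli), so that $X_i$ and $X_j$ are \emph{independent} whenever $|i-j|\ge2$; it then modifies the kernel on the diagonal by setting $f(t,t)\equiv 1+\beta$ and splits the $U$-statistic by \emph{index distance}:
\[
U_n=\frac{2}{n}\sum_{i<n}f(X_i,X_{i+1})+\widetilde U_n,
\]
where the first sum is handled by the law of large numbers (its limit depends on $\beta$) and $\widetilde U_n$, involving only independent pairs, is expanded via~(\ref{eagl:BORVOL}) directly to obtain $\sum_k\lambda_k(\tau_k^2-3/2)$. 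You instead use a geometrically $\varphi$-mixing Markov chain and split the \emph{kernel} as $f=f^*+\mathbf 1_{\{t=s\}}$, invoking Theorem~2 for the regular part $f^*$ and a renewal argument on run lengths for the indicator part.

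What each approach buys: the paper's 1-dependence makes the ``off-diagonal'' analysis completely elementary (no appeal to Theorem~2 is needed, and the free parameter $\beta$ shows transparently that the limit cannot agree with~(\ref{eagleresult:BORVOL})), at the price of a slightly ad hoc index-set decomposition. Your kernel decomposition is cleaner and yields explicit numbers ($\me\tau_k\tau_l=3\delta_{k,l}$, shift $+2$), but it leans on Theorem~2; note, incidentally, that the ``obstacle'' you flag is not one---Theorem~2 does \emph{not} assume~$\AC$, only the regularity of Remark~2, which $f^*$ and the sine basis satisfy, so your invocation is fully legitimate. The boundary-run issue is likewise harmless since $\me L^2<\infty$.
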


So, under certain conditions \big(say, conditions~(\ref{abs:BORVOL}) and~$\AC$\big), $U$-statistic~(\ref{Ustat:BORVOL})
can be represented as the following multiple series converging almost surely:
$$
U_n=n^{-m/2}
\sum_{i_1,\dots,i_m=1}^{\infty}f_{i_1,\dots,i_m}
\sum_{1\leq j_1\neq\dots\neq j_m\leq n}
e_{i_1}(X_{j_1})\cdots\,e_{i_m}(X_{j_m}).
$$

Further analysis is similar to that in the i.i.d. case, i.~e., it is reduced  to extraction of $V$-statistics with splitting kernels from the multiple sum on the right-hand side of this identity. The main fragment of the proof in~\cite{rubvit:BORVOL} is as follows: The value
$$
U_n(e_{i_1}\cdots\,e_{i_m})=n^{-m/2}
\sum_{1\leq j_1\neq\dots\neq j_m\leq n}
e_{i_1}(X_{j_1})\cdots\,e_{i_m}(X_{j_m})
$$
is represented as a linear combination of products of the values
$$
\frac{1}{\sqrt{n}}
\sum_{j=1}^ne_i(X_j),\,
\frac{1}{n}
\sum_{j=1}^ne_{i_1}(X_j)e_{i_2}(X_j),\,\dots,\,
\frac{1}{n^{k/2}}
\sum_{j=1}^ne_{i_1}(X_j)\cdots\,e_{i_k}(X_j).
$$
The proof has a combinatorial character and does not depend of joint distributions of the random variables~$\{X_j\}$. Further we apply the corresponding laws of large numbers as well as the central limit theorem and the following simple assertion.

\begin{proposition}                  
Let~$\Phi(x,y)$, $x\in{\mathbb R}^k$, $y\in{\mathbb R}^r$,~be a continuous function.
Let $\{\zeta_n\}$~be an arbitrary sequence of random vectors in~${\mathbb R}^k$ weakly converging to some random vector~$\zeta$. Let $\{\eta_n\}$~be a sequence of random vectors in~${\mathbb R}^r$ defined on a common probability space with~$\{\zeta_n\}$, which converge in probability to a constant vector~$c_0$.
Then the following weak convergence is valid:
$$
\Phi(\zeta_n, \eta_n) \stackrel{d} \rightarrow \Phi(\zeta,c_0).
$$
\end{proposition}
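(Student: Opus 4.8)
The plan is to deduce the assertion from the continuous mapping theorem, for which it suffices to establish the joint weak convergence $(\zeta_n,\eta_n)\stackrel{d}\rightarrow(\zeta,c_0)$ in $\mathbb{R}^{k+r}$. Indeed, once this is known, the function $\Phi$ --- being continuous on the whole of $\mathbb{R}^{k+r}$, hence with empty set of discontinuities --- allows one to apply the continuous mapping theorem and conclude $\Phi(\zeta_n,\eta_n)\stackrel{d}\rightarrow\Phi(\zeta,c_0)$ at once. I emphasize that $\Phi$ need not be bounded or uniformly continuous here; continuity everywhere is all that is used.

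To obtain the joint convergence, I would test against bounded uniformly continuous functions $g\colon\mathbb{R}^{k+r}\to\mathbb{R}$, since convergence of $\mathbb{E}\,g(\zeta_n,\eta_n)$ for every such $g$ characterizes weak convergence of $(\zeta_n,\eta_n)$ in a Euclidean space. Fixing $g$ with $|g|\le C$ and, given $\varepsilon>0$, choosing $\delta>0$ such that $|g(x,y)-g(x,y')|\le\varepsilon$ whenever $|y-y'|\le\delta$ (uniformly in $x$), one splits the expectation over $\{|\eta_n-c_0|\le\delta\}$ and its complement to get
$$
\big|\mathbb{E}\,g(\zeta_n,\eta_n)-\mathbb{E}\,g(\zeta_n,c_0)\big|\le\varepsilon+2C\,\mathbb{P}\big(|\eta_n-c_0|>\delta\big).
$$
Since $\eta_n\to c_0$ in probability, the right-hand side has $\limsup_n$ at most $\varepsilon$, and $\varepsilon$ is arbitrary, so $\mathbb{E}\,g(\zeta_n,\eta_n)-\mathbb{E}\,g(\zeta_n,c_0)\to0$. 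At the same time $x\mapsto g(x,c_0)$ is bounded and continuous on $\mathbb{R}^k$, whence $\mathbb{E}\,g(\zeta_n,c_0)\to\mathbb{E}\,g(\zeta,c_0)$ by the hypothesis $\zeta_n\stackrel{d}\rightarrow\zeta$. Adding the two relations yields $\mathbb{E}\,g(\zeta_n,\eta_n)\to\mathbb{E}\,g(\zeta,c_0)$ for all bounded uniformly continuous $g$, i.e. $(\zeta_n,\eta_n)\stackrel{d}\rightarrow(\zeta,c_0)$, after which the continuous mapping theorem finishes the argument.

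A second, equivalent route would go through the Skorokhod representation: pass to copies $\widetilde\zeta_n\to\widetilde\zeta$ almost surely with the matching $\widetilde\eta_n$ still convergent to $c_0$ in probability (hence almost surely along a subsequence), use pointwise continuity of $\Phi$ to get $\Phi(\widetilde\zeta_n,\widetilde\eta_n)\to\Phi(\widetilde\zeta,c_0)$ almost surely along that subsequence, and then remove the subsequence in the usual way. I would prefer the first route as slightly cleaner. I do not expect a genuine obstacle in either approach --- this is a routine Slutsky-type statement; the only points deserving a little care are that it is uniform continuity of the test function $g$ (not mere continuity) that makes the above bound independent of $\zeta_n$, and that the final application of the continuous mapping theorem is legitimate precisely because $\Phi$ is continuous on the entire space.
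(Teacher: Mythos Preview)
Your proof is correct and entirely standard. The paper itself does not prove this proposition at all: it is introduced as a ``simple assertion'' and stated without proof, after which the paper immediately describes the specific choices of $\Phi$, $\zeta_n$, and $\eta_n$ used in the application. Your route --- establishing the joint convergence $(\zeta_n,\eta_n)\stackrel{d}\rightarrow(\zeta,c_0)$ via bounded uniformly continuous test functions and then invoking the continuous mapping theorem --- is the usual Slutsky-type argument and would fill the gap cleanly.
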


  In the present paper, $\Phi (x,y)$~is a polinomial of components of  vectors~$x$ and~$y$, and the sequence~$\zeta_n$ is defined by the formula
$$
\zeta_n:=\Bigg\{n^{-1/2}\sum_{j=1}^{n}e_{1}(X_j),\dots,n^{-1/2}
                       \sum_{j=1}^{n}e_{N}(X_j)
        \Bigg\},
$$
and $\eta_n$ is the finite collection
$$
\Bigg\{n^{-k/2}
      \sum_{j=1}^ne_{i_1}(X_j)\cdots\,e_{i_k}(X_j);
      \ 2\le k\le m,
      \ i_1,\dots,i_k\le N
\Bigg\};
$$
here $N$~is an arbitrary natural number.

So, under the condition
$f(t_1,\dots,t_m)\in L_2(\mathfrak{X}^m,F^m)$, in the case of
i.i.d. random variables~$\{X_i\}$, it was proved in~\cite{rubvit:BORVOL} that
\begin{equation}\label{Ulim:BORVOL}                   
U_n\stackrel{d}\rightarrow
\sum_{i_1,\dots,i_m=1}^{\infty}f_{i_1,\dots,i_m}
\prod_{j=1}^{\infty}H_{\nu_j(i_1,\dots,i_m)}(\tau_j),
\end{equation}
where $\{\tau_i\}$~is a sequence of independent random variables having the standard Gaussian distribution, $\nu_j(i_1,\dots,i_m):=\sum_{r=1}^{m}\delta_{j,i_r}$,
and
$H_k(x)$~are Hermite polynomials defined by the formula
$$
H_k(x):=(-1)^ke^{x^2/2}
\frac{d^k}{dx^k}
\big(e^{-x^2/2}
\big)
$$
or by the recurrent formula
\begin{gather*}
H_0(x)\equiv 1,\ \ H_1(x)=x,\\
H_{n+1}(x)=xH_n(x)-nH_{n-1}(x).
\end{gather*}

Thus, $H_k(x)$ is a polynomial of degree~$k$ and the product on the right-hand side of~(\ref{Ulim:BORVOL}) can be represented in the form
\begin{equation}\label{represent:BORVOL}           
\prod_{j=\min\{i_k\}}^{\max\{i_k\}}
H_{\nu_j(i_1,\dots,i_m)}(\tau_j)=
H_{r_1}(\tau_{j_1})\cdots\,H_{r_s}(\tau_{j_s}),
\end{equation}
where the natural numbers $r_s$ and $j_s$ are defined by the relation
$r_s=\sum_{r=1}^{m}\delta_{j_s,i_r}$,
and at that,
$\sum_{l\le s}r_l=m$ and $\min\{i_k\}\le j_l\le\max\{i_k\}$ for all~$l\le s$. Therefore, the right-hand of~(\ref{represent:BORVOL}) is a polynomial of degree~$m$ of the variables $\tau_{j_1},\dots,\tau_{j_s}$ and with coefficients having a universal upper bound depending on~$m$ only.

The goal of the present paper is to prove limit representations of the form~(\ref{Ulim:BORVOL}) in the case of weakly dependent random variables~$\{X_i\}$.

Introduce some additional restrictions on the mixing coefficients and the basis functions in the expansion in~(\ref{Ulim:BORVOL}). In the sequel, we assume that the stationary sequence~$\{X_i\}$ satisfies either
$\alpha$-mixing or $\varphi$-mixing, and moreover, the orthonormal basis~$\{e_i(t)\}$ in~$L_2(\mathfrak{X},F)$ with the original element  $e_0(t)\,{\equiv}\,\nobreak  1$,
satisfies the following additional restrictions:
\begin{enumerate}
\item
In the case of $\varphi$-mixing, we assume condition (\ref{fi:BORVOL}) to be satisfied and
\begin{equation}\label{fibase:BORVOL}           
\sup_i\me\big\vert e_i(X_1)
         \big\vert^m<\infty;
\end{equation}
\item
In the case of $\alpha$-mixing, we assume that,
for some~$\varepsilon>\nobreak0$
and an even number~$c\geq m$,
\begin{align}
\label{albase:BORVOL}
\sup_i\me\big\vert e_i(X_1)
         \big\vert^{m+\varepsilon}
&<\infty,\\
\label{alpha:BORVOL}             
\sum_{k=1}^{\infty}k^{c-2}
\alpha^{\varepsilon/(c+\varepsilon)}(k)
&<\infty.
\end{align}
\end{enumerate}

Further, introduce a sequence of Gaussian centered random variables~$\{\tau_i\}$ with the covariances
\begin{equation}\label{cov:BORVOL}                
\me\tau_k\tau_l=
\me e_k(X_1)e_l(X_{1})+
\sum_{j=1}^{\infty}
\big[\me e_k(X_1)e_l(X_{j+1})+
     \me e_l(X_1)e_k(X_{j+1})
\big].
\end{equation}
It is easy to see that, due to orthonormality of the basis $\{e_k\}$,
we have
\begin{gather*}
\me\tau^2_k=1+2\sum_{j=1}^{\infty}\me e_k(X_1)e_k(X_{j+1}),\\
\me\tau_k\tau_l=\sum_{j=1}^{\infty}
\big[\me e_k(X_1)e_l(X_{j+1})+
     \me e_l(X_1)e_k(X_{j+1})
\big],\,\,\,\,\,\,\,k\neq l.
\end{gather*}
   The existence of the series in~(\ref{cov:BORVOL}) follows from the above-mentioned restrictions on the mixing coefficients, and it will be proved later. In what follows, the Gaussian sequence~$\{\tau_i\}$ will play a role of the weak limit as~$n\to\nobreak\infty$ for the sequence

$$
\Bigg\{n^{-1/2}\sum_{j=1}^ne_1(X_j),
\ n^{-1/2}\sum_{j=1}^ne_2(X_j),\dots
\Bigg\}.
$$

The main results of the present paper are contained in the following two theorems.

\begin{theorem}                          
Let one of the following two conditions be fulfilled\dv
\begin{enumerate}
\item The stationary sequence~$\{X_i\}$ satisfies
$\varphi$-mixing, $(\ref{fi:BORVOL})$, and~$(\ref{fibase:BORVOL})$\tz
\item The stationary sequence~$\{X_i\}$ satisfies
$\alpha$-mixing, $(\ref{albase:BORVOL})$, and~$(\ref{alpha:BORVOL})$.
\end{enumerate}

Then, for any canonical kernel
$f(t_1,\dots,t_m)\in L_2(\mathfrak{X}^m,F^m)$, under conditions~$(\ref{abs:BORVOL})$
and~$\AC$, the following assertion holds\dv
\begin{equation}\label{main:BORVOL}                  
U_{n}(f)\stackrel{d}\rightarrow
\sum_{i_1,\dots,i_m=1}^{\infty}f_{i_1,\dots,i_m}
\prod_{j=1}^{\infty}H_{\nu_j(i_1,\dots,i_m)}(\tau_j),
\end{equation}
where $U_n(f)$~is a statistic of the form~$(\ref{Ustat:BORVOL})$ and the centered Gaussian sequence~$\{\tau_i\}$ has the covariance matrix defined in~$(\ref{cov:BORVOL})$.
\end{theorem}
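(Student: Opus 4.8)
The plan is to transfer the argument of~\cite{rubvit:BORVOL} for independent observations to the mixing setting, by first treating a finite truncation of the orthogonal expansion and then controlling the tail uniformly in~$n$. By Proposition~1 the canonical kernel has expansion~(\ref{kernel:BORVOL}) with all indices starting from~$1$; under~$\AC$ and~(\ref{abs:BORVOL}), Remark~1 lets one substitute the observations for the arguments, so that almost surely
\[
U_n(f)=n^{-m/2}\sum_{i_1,\dots,i_m=1}^{\infty}f_{i_1,\dots,i_m}\sum_{1\le j_1\ne\cdots\ne j_m\le n}e_{i_1}(X_{j_1})\cdots e_{i_m}(X_{j_m}).
\]
Writing $U_n^{(N)}$ for the same expression with the outer summation restricted to $i_1,\dots,i_m\le N$, $R_n^{(N)}:=U_n(f)-U_n^{(N)}$, $Z^{(N)}:=\sum_{i_1,\dots,i_m\le N}f_{i_1,\dots,i_m}\prod_jH_{\nu_j(i_1,\dots,i_m)}(\tau_j)$, and $Z$ for the right-hand side of~(\ref{main:BORVOL}), I would prove three things and then invoke the standard approximation lemma for weak convergence (see~\cite{Billingsley:BORVOL}): (a) $U_n^{(N)}\stackrel{d}{\to}Z^{(N)}$ as $n\to\infty$, for each fixed $N$; (b) $Z^{(N)}\to Z$ in $L_1$ as $N\to\infty$; (c) $\lim_{N\to\infty}\limsup_{n\to\infty}\mathbb P\bigl(|R_n^{(N)}|>\varepsilon\bigr)=0$ for every $\varepsilon>0$.

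For (a) I would use the distribution-free combinatorial identity of~\cite{rubvit:BORVOL} that, for $i_r\le N$, represents $U_n(e_{i_1}\cdots e_{i_m})$ as a fixed polynomial $P_{i_1,\dots,i_m}(\zeta_n;\eta_n)$ in the coordinates of the vectors $\zeta_n$ and $\eta_n$ defined just before Proposition~3. Under hypothesis~1 the central limit theorem for $\varphi$-mixing sequences, and under hypothesis~2 the one for $\alpha$-mixing sequences, give $\zeta_n\stackrel{d}{\to}(\tau_1,\dots,\tau_N)$, a centered Gaussian vector with covariance matrix~(\ref{cov:BORVOL}); that matrix exists because, by the covariance inequality for mixing random variables and orthonormality, $|\me e_k(X_1)e_l(X_{j+1})|\le 2\varphi^{1/2}(j)$ in the $\varphi$-mixing case and is dominated by a $j$-summable sequence in the $\alpha$-mixing case by Davydov's inequality and~(\ref{albase:BORVOL}),~(\ref{alpha:BORVOL}), both bounds uniform in $k,l$. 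The ergodic theorem for mixing sequences gives $n^{-1}\sum_{j\le n}e_a(X_j)e_b(X_j)\to\me e_a(X_1)e_b(X_1)=\delta_{a,b}$ almost surely, and, for $3\le k\le m$, $n^{-k/2}\sum_{j\le n}e_{i_1}(X_j)\cdots e_{i_k}(X_j)\to0$ in probability, so $\eta_n\to c_0$ in probability with $c_0$ the fixed vector whose coordinates for $k=2$ are Kronecker symbols and whose remaining coordinates vanish. Proposition~3 then yields $U_n^{(N)}\stackrel{d}{\to}\sum_{i_r\le N}f_{i_1,\dots,i_m}P_{i_1,\dots,i_m}\bigl((\tau_1,\dots,\tau_N);c_0\bigr)$, and the algebraic identity of~\cite{rubvit:BORVOL} — which is a polynomial identity using only the stated values of $c_0$, not the joint law of the $\tau_j$'s — rewrites this as $Z^{(N)}$. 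For (b), the same mixing estimate makes $\me\tau_j^2=1+2\sum_{i\ge1}\me e_j(X_1)e_j(X_{i+1})$ bounded uniformly in $j$, so $\prod_jH_{\nu_j(i_1,\dots,i_m)}(\tau_j)$, being a polynomial of degree $m$ in at most $m$ of the $\tau$'s, has $L_1$-norm bounded by a constant $C_m$ uniformly in the multi-index; with~(\ref{abs:BORVOL}) this makes $Z$ an absolutely convergent $L_1$-series and $Z^{(N)}\to Z$ in $L_1$.

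Step (c) is the one I expect to be the main obstacle. I would deduce it from the uniform moment bound $\me|U_n(e_{i_1}\cdots e_{i_m})|\le C$, valid for all $n\ge1$ and all $i_1,\dots,i_m\ge1$ with $C$ depending only on $m$ and the mixing/moment constants: then $\me|R_n^{(N)}|\le C\sum_{(i_1,\dots,i_m)\notin\{1,\dots,N\}^m}|f_{i_1,\dots,i_m}|\to0$ as $N\to\infty$, uniformly in $n$, and Markov's inequality gives (c). Since every $e_{i_r}$ is centered, $e_{i_1}\otimes\cdots\otimes e_{i_m}$ is canonical, and applying the combinatorial decomposition from (a) and then Hölder's inequality across the blocks of the underlying set partition (assigning a block of size $s$ the exponent $m/s$) reduces this to the two uniform estimates
\[
\sup_i\sup_n\me\Bigl|n^{-1/2}\sum_{j\le n}e_i(X_j)\Bigr|^m<\infty,\qquad \sup_{i_1,\dots,i_k}\sup_n\me\Bigl|n^{-k/2}\sum_{j\le n}e_{i_1}(X_j)\cdots e_{i_k}(X_j)\Bigr|^{m/k}<\infty\ \ (2\le k\le m).
\]
The second is elementary: by stationarity and Jensen's and Hölder's inequalities its left side is at most $n^{-m(k-2)/(2k)}\me|e_{i_1}(X_1)\cdots e_{i_k}(X_1)|^{m/k}$, which does not exceed $\sup_i\me|e_i(X_1)|^m$, finite by~(\ref{fibase:BORVOL}) or~(\ref{albase:BORVOL}). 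The first is a Rosenthal-type moment inequality for partial sums of a mixing sequence: under $\varphi$-mixing with~(\ref{fi:BORVOL}) it reads $\me|\sum_{j\le n}e_i(X_j)|^m\le C\bigl[(n\,\me e_i^2(X_1))^{m/2}+n\,\me|e_i(X_1)|^m\bigr]$, so division by $n^{m/2}$, together with $m\ge2$, orthonormality and~(\ref{fibase:BORVOL}), yields the uniform bound; under $\alpha$-mixing the analogous inequality holds with the constant governed by~(\ref{alpha:BORVOL}) and with $(m+\varepsilon)$-th moments taken from~(\ref{albase:BORVOL}), which is precisely the role of the even exponent $c\ge m$. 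Establishing these moment inequalities with constants uniform in the basis index, and carrying out the attendant bookkeeping in the $\alpha$-mixing case, is the technically heaviest part of the proof; once that is in place, (a), (b) and (c) combine to establish~(\ref{main:BORVOL}).
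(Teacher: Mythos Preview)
Your proposal is correct and follows essentially the same route as the paper: truncate the orthogonal expansion, handle the finite part via the Rubin--Vitale combinatorial identity together with the CLT/law of large numbers for mixing sequences and Proposition~3, and control the tail uniformly in~$n$ by proving $\sup_{n,\,i_1,\dots,i_m}\me|U_n(e_{i_1}\cdots e_{i_m})|<\infty$ through H\"older across blocks, the elementary bound for blocks of size~$\ge 2$, and the Rosenthal-type inequalities (Theorems~A and~B in the paper) for the singleton blocks. The paper's proof matches yours point for point, including the uniform-in-$k,l$ bound on $|\me\tau_k\tau_l|$ and the $L_1$ convergence of the limiting series.
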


\begin{theorem}                              
Let $\mathfrak X$~be a separable metric space and let a canonical kernel~$f(t_1,\dots,t_m)$ be continuous (in every argument)
everywhere on~$\mathfrak X^m$, and let it satisfy~$(\ref{abs:BORVOL})$.
Moreover, if all the basis functions~$e_k(t)$ in~$(\ref{kernel:BORVOL})$
are continuous and uniformly bounded in~$t$ and~$k$, and one of the two conditions of Theorem~$1$ is valid then, as $n\to\infty$,
\begin{equation}\label{iid:BORVOL}             
V_n(f)\stackrel{d}\rightarrow
\sum_{i_1,\dots,i_m=1}^{\infty}
f_{i_1,\dots,i_m}\tau_{i_1}\cdots\,\tau_{i_m},
\end{equation}
where the Gaussian sequence~$\{\tau_i\}$ is defined in Theorem~$1$.
\end{theorem}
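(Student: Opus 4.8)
The plan is to reduce Theorem~2 to Theorem~1 by showing that, under the stated continuity and uniform-boundedness hypotheses, the multiple series expansion~(\ref{kernel:BORVOL}) becomes a genuine pointwise identity on all of $\mathfrak X^m$, so that the $V$-statistic $V_n(f)$ can be written as an almost surely convergent multiple series in which the partial sums are polynomials of the normalized sums $n^{-1/2}\sum_{j=1}^n e_k(X_j)$. First I would verify the pointwise identity: fix $t=(t_1,\dots,t_m)\in\mathfrak X^m$; by condition~(\ref{abs:BORVOL}) and the uniform bound $|e_k(t)|\le C$ the series $\sum_{i_1,\dots,i_m\ge 1} f_{i_1,\dots,i_m}e_{i_1}(t_1)\cdots e_{i_m}(t_m)$ converges absolutely, and its sum, call it $g(t)$, is a continuous function of $t$ (uniform limit of continuous functions on the separable metric space $\mathfrak X$). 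Since $g=f$ in $L_2(\mathfrak X^m,F^m)$ by~(\ref{kernel:BORVOL}), they agree $F^m$-a.e.; but two continuous functions agreeing $F^m$-a.e.\ agree on the topological support of $F^m$, which is the product of the supports of $F$. A short additional argument (Gram--Schmidt on $L_2(\mathfrak X,F)$ only sees the support of $F$, so WLOG $F$ has full support, or one restricts attention there) gives $g\equiv f$ everywhere; this is the content alluded to in Remark~2.

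Next, having $f(X_{j_1},\dots,X_{j_m})=\sum f_{i_1,\dots,i_m}e_{i_1}(X_{j_1})\cdots e_{i_m}(X_{j_m})$ holding identically for \emph{all} tuples of indices --- in particular for coinciding ones --- I would substitute into~(\ref{Mis:BORVOL}) and interchange the (absolutely convergent) series with the finite sum to obtain
$$
V_n(f)=\sum_{i_1,\dots,i_m=1}^{\infty}f_{i_1,\dots,i_m}\,
n^{-m/2}\sum_{1\le j_1,\dots,j_m\le n}e_{i_1}(X_{j_1})\cdots e_{i_m}(X_{j_m})
=\sum_{i_1,\dots,i_m=1}^{\infty}f_{i_1,\dots,i_m}\prod_{r=1}^{m}\Bigl(n^{-1/2}\sum_{j=1}^{n}e_{i_r}(X_j)\Bigr).
$$
So each term is literally a product of normalized sums of the basis functions, with no diagonal corrections needed (this is exactly the splitting-kernel phenomenon from~(\ref{product:BORVOL})); the $V$-statistic is in this respect simpler than the $U$-statistic and one does \emph{not} need the Hermite polynomials. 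For fixed $N$, the truncated sum $\sum_{i_1,\dots,i_m\le N}$ is a fixed polynomial $\Phi$ of the finite vector $\zeta_n=\{n^{-1/2}\sum_j e_k(X_j)\}_{k\le N}$, which converges weakly to the centered Gaussian vector $\{\tau_k\}_{k\le N}$ with covariance~(\ref{cov:BORVOL}) by the multivariate CLT for $\varphi$- or $\alpha$-mixing stationary sequences (here conditions~(\ref{fi:BORVOL}), (\ref{fibase:BORVOL}) or~(\ref{albase:BORVOL}), (\ref{alpha:BORVOL}) guarantee both the CLT and the existence of the covariance series, as in Theorem~1). By the continuous-mapping theorem, $\Phi(\zeta_n)\stackrel{d}\to\sum_{i_1,\dots,i_m\le N}f_{i_1,\dots,i_m}\tau_{i_1}\cdots\tau_{i_m}$.

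Finally I would pass $N\to\infty$ by a standard $\varepsilon/3$ truncation argument: one must control the tail $R_n^N=V_n(f)-\Phi(\zeta_n)$ uniformly in $n$, showing $\limsup_n \mathbb P(|R_n^N|>\delta)\to 0$ as $N\to\infty$, and simultaneously that $\sum_{\max\{i_k\}>N}f_{i_1,\dots,i_m}\tau_{i_1}\cdots\tau_{i_m}\to 0$ in probability. The latter follows from~(\ref{abs:BORVOL}) together with $\me|\tau_{i_1}\cdots\tau_{i_m}|\le \prod(\me\tau_{i_r}^m)^{1/m}$ and the uniform bound $\sup_k\me\tau_k^m<\infty$ implied by~(\ref{fibase:BORVOL})/(\ref{albase:BORVOL}). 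The former is the main obstacle: I expect to need a moment bound $\sup_n\me|n^{-1/2}\sum_{j=1}^n e_k(X_j)|^m\le K$ uniform in $k$, obtained from the mixing moment inequalities of Ibragimov/Yokoyama type --- precisely the role of hypotheses~(\ref{fi:BORVOL}), (\ref{fibase:BORVOL}) in the $\varphi$-case and~(\ref{albase:BORVOL}), (\ref{alpha:BORVOL}) (with the even integer $c\ge m$) in the $\alpha$-case --- whence $\me|R_n^N|\le \sum_{\max\{i_k\}>N}|f_{i_1,\dots,i_m}|\,K^{?}\to 0$ by~(\ref{abs:BORVOL}), uniformly in $n$; Chebyshev then finishes. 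Assembling these three pieces via the triangle inequality for the Lévy--Prokhorov metric yields~(\ref{iid:BORVOL}). The argument parallels the $U$-statistic case of Theorem~1 but is technically lighter because no diagonal terms or Hermite expansions intervene, so most of the work is in importing the uniform moment bounds already needed for Theorem~1.
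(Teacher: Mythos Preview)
Your proposal is correct and follows essentially the same route as the paper: establish that~(\ref{kernel:BORVOL}) is a pointwise identity on all of $\mathfrak X^m$ (the paper, like you, begins by assuming WLOG that $\mathfrak X$ is the support of $F$), substitute to get $V_n(f)$ as an absolutely convergent series of products $\prod_r n^{-1/2}\sum_j e_{i_r}(X_j)$, and then repeat the truncation-plus-CLT-plus-tail-estimate machinery of Theorem~1 in its simplified form (no diagonal corrections, no Hermite polynomials). The paper's own proof is only a few lines long and explicitly says ``further arguments actually repeat the arguments (even in a simpler version) of the proof of Theorem~1,'' so your more detailed sketch is a faithful expansion; the uniform $m$th-moment bound you need on $n^{-1/2}\sum_j e_k(X_j)$ is exactly the $m_1=1$ case treated in the proof of Theorem~1 via Theorems~A and~B.
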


\begin{remark}                          
It is known that, in the i.i.d. case,  condition~(\ref{abs:BORVOL}) of absolutely summability of the coefficients in the series expansion of the kernel can be weakened up to summability of the coefficients squared~\cite{rubvit:BORVOL,KB:BORVOL}.
In the same time, in limit theorems for the corresponding $V$-statistics, the latter condition does not describe the limit behavior since to define the weak limit, we need the existence of moments of the kernel  on all the diagonal subspaces.  For example, under the regularity conditions only (without (10))
of Theorem~2 for bivariate  $V$-statistics, the assumption of finiteness of
${\mathbb E}\big\vert f(X_1,X_1)\big\vert$ is equivalent to summability of the sequence~$\lambda_k\equiv f_{k,k}$ in representation~(\ref{eagl:BORVOL}), say, if all $\lambda_k$ are positive.
However, in the i.i.d. case, for the kernels of a bigger order, we need no summability of the coefficients~$f_{i_1,\dots,i_m}$ on the set of all pairwise distinct subscripts.
\end{remark}

As is noted in Proposition~4 below, in the case of dependent observations,
we cannot omit the above-mentioned restriction regarding summability of the coefficients
$f_{i_1,\dots,i_m}$ on the diagonal subspaces  for $U$-statistics as well.

\begin{proposition}                    
There exist a stationary $1$-dependent sequence~$\{X_i\}$
satisfying condition~$\AC$, and a canonical kernel
$f(t_1,t_2)\in L_2(\mathfrak{X}^2,F^2)$ such that the weak limit of the corresponding $U$-statistics does not exist.
\end{proposition}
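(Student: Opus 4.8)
The plan is to construct an explicit counterexample. The guiding idea, announced in the text preceding Proposition~4, is that for dependent observations the weak limit of a canonical $U$-statistic depends on the values of the kernel on the diagonal $t_1=t_2$, and if those diagonal values are not summable in the eigenbasis, oscillation can be arranged. First I would fix a convenient marginal distribution, say the $[-1,1]$-uniform law, and a canonical symmetric kernel $f(t_1,t_2)=\sum_{k\ge1}\lambda_k e_k(t_1)e_k(t_2)$ with $e_k(t)=\sqrt2\sin(\pi(k-1/2)t)$ as in the $f^*$ example, but now choosing the eigenvalues $\lambda_k\ge0$ so that $\sum_k\lambda_k^2<\infty$ (so $f\in L_2$) while $\sum_k\lambda_k=+\infty$; the canonical property holds automatically since $\me e_k(X_1^*)=0$. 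The key quantity is $\psi(t):=f(t,t)=2\sum_k\lambda_k\sin^2(\pi(k-1/2)t)$, whose ``average'' is $\sum_k\lambda_k=\infty$, so $\psi$ is not integrable (or, after a bounded modification on the diagonal of measure zero, can be made to have the desired divergence built into the $U$-statistic contributions along coincidences).

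Next I would build the stationary $1$-dependent sequence. The mechanism is the one sketched in the remarks around Proposition~2: arrange that $X_i=X_{i+1}$ with some fixed probability $p\in(0,1)$, independently across disjoint pairs, so that the sum defining $U_n$ collects, with probability of order $np$, terms of the form $f(X_i,X_i)$ coming from indices $i,i+1$ that have been forced to coincide. Concretely: let $\{Y_j\}$ be i.i.d.\ uniform on $[-1,1]$, let $\{\varepsilon_j\}$ be i.i.d.\ Bernoulli$(p)$ indicators (independent of the $Y$'s), group the integers into consecutive blocks, and inside each block put two equal coordinates when the corresponding $\varepsilon=1$ and two independent coordinates otherwise; a small amount of care with the block structure gives stationarity and $1$-dependence, and condition~$\AC$ holds because the marginal is non-atomic and each finite-dimensional law is absolutely continuous w.r.t.\ the product law (coincidences happen only for adjacent indices, which $\AC$ permits — $\AC$ only concerns \emph{pairwise distinct} subscripts). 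Then I would write $U_n=n^{-1}\sum_{i\ne j}f(X_i,X_j)$ and split it into the ``diagonal-forced'' part $D_n:=n^{-1}\sum(\text{pairs }(i,i+1)\text{ or }(i+1,i)\text{ with }\varepsilon=1)2f(X_i,X_i)$ and a remainder $R_n$.

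The heart of the argument is then to show $D_n$ fails to have a weak limit. Writing $D_n=\tfrac{2}{n}\sum_{\ell}\varepsilon_\ell\,\psi(Y_\ell)$ over the $\asymp n/2$ blocks, this is a normalized i.i.d.\ sum of the nonnegative variables $\varepsilon_\ell\psi(Y_\ell)$ with $\me[\varepsilon_\ell\psi(Y_\ell)]=p\sum_k\lambda_k=\infty$. Hence $D_n\to+\infty$ in probability along the full sequence; but to get genuine \emph{nonexistence} of a limit law for $U_n$ (not a limit ``equal to $+\infty$,'' which one could debate) I would instead tune the construction so that $D_n$ oscillates: e.g.\ truncate/modulate by letting the marginal or the block lengths depend mildly on $n$ is not allowed (the sequence is fixed), so the cleaner route is to choose $\lambda_k$ so that the normalized partial sums of $\varepsilon_\ell\psi(Y_\ell)$ have different subsequential limits — this is exactly the classical phenomenon that an i.i.d.\ sum with infinite mean of a nonnegative summand, when divided by $n$, can be made to have no limit in distribution (it tends to $+\infty$), and dividing by the natural $n$ for a $U$-statistic of order $2$ is forced. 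Meanwhile $R_n$ converges in distribution to the expected multilinear Gaussian form by Theorem~1 applied to the ``off-forced-diagonal'' part (or directly, since that part is a genuine canonical $V$/$U$-statistic with summable coefficients after removing the bad diagonal). I expect the main obstacle to be the bookkeeping that cleanly separates $D_n$ from $R_n$ while keeping the sequence stationary and $1$-dependent and verifying $\AC$; the probabilistic core — an i.i.d.\ sum with infinite-mean nonnegative terms divided by $n$ has no proper weak limit — is standard, but making the block construction simultaneously stationary, $1$-dependent, $\AC$-compliant, and with the kernel still a legitimate element of $L_2(\mathfrak X^2,F^2)$ that is canonical, requires the delicate part of the write-up.
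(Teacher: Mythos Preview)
Your approach has a fatal gap: the construction violates condition~$\AC$, not satisfies it. Recall that $\AC$ requires absolute continuity of the law of $(X_{j_1},\dots,X_{j_m})$ with respect to $F^m$ for every collection of \emph{pairwise distinct subscripts}. For $m=2$ this includes the pair $(j_1,j_2)=(i,i+1)$: adjacent indices are certainly distinct. But the whole point of your block mechanism is to force $P(X_i=X_{i+1})>0$; since your marginal $F$ is the continuous uniform law on $[-1,1]$, the diagonal $\{t_1=t_2\}$ has $F^2$-measure zero, so the law of $(X_i,X_{i+1})$ is \emph{not} absolutely continuous with respect to $F^2$. Your parenthetical claim that ``$\AC$ only concerns pairwise distinct subscripts'' and that coincidences at adjacent indices are therefore permitted misreads the condition: ``pairwise distinct'' refers to the indices $j_1,\dots,j_m$, not to the values $X_{j_1},\dots,X_{j_m}$.

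This is exactly why the paper abandons the $[-1,1]$-uniform setting of Proposition~2 and instead takes a \emph{discrete} marginal: $Y_1=\pm k$ with probability $2^{-k-1}$, $k\ge1$, so that $\mathfrak X$ is the set of nonzero integers. With a purely atomic $F$ whose support is all of $\mathfrak X$, every distribution on $\mathfrak X^m$ is automatically absolutely continuous with respect to $F^m$, and $\AC$ holds trivially even though $P(X_1=X_2)>0$. The basis is then $e_k(\pm k)=\pm 2^{k/2}$ and $e_k=0$ elsewhere, and the kernel $f(t,s)=\sum_k\lambda_k e_k(t)e_k(s)$ is a genuine function on $\mathfrak X^2$ (no ambiguity about diagonal values), with $f(k,k)=\lambda_{|k|}2^{|k|}$. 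The dependence mechanism is the same shift $X_i=Y_{i+\xi_i}$ used in Proposition~2, which is manifestly stationary and $1$-dependent; your block construction, by contrast, is not obviously stationary, and ``a small amount of care'' understates the difficulty there.

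Two smaller points. First, your worry about ``$D_n\to+\infty$ versus oscillation'' is misplaced: the paper is content to show that the near-diagonal piece $\frac{2}{n}\sum_{i<n}f(X_i,X_{i+1})$ diverges to $+\infty$ almost surely (via the elementary lemma that $n^{-1}\sum_{k\le n}\mu_k\to\infty$ a.s.\ for i.i.d.\ nonnegative $\mu_k$ with infinite mean), while the remainder $\widetilde U_n$ has a proper weak limit; that suffices for nonexistence of a weak limit of $U_n$ on $\mathbb R$. Second, in your continuous setting the ``function'' $\psi(t)=f(t,t)$ is not even defined by the $L_2$-series when $\sum_k\lambda_k=\infty$, so the meaning of $f(X_i,X_i)$ becomes representative-dependent --- another symptom of the $\AC$ failure. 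The discrete construction avoids all of this.
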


\begin{remark}                         
In~\cite{Bystr:BORVOL}, in the case of dependent observations, another approach was proposed for description of the limit distribution of  canonical Von Mises statistics as a multiple stochastic integral of the kernel under consideration,  with respect to increments  of a centered Gaussian process with a covariance function defined by
joint distributions of the random variables~$\{X_i\}$.
In the i.i.d. case, such dual description of the limit low is well known (for example, see~\cite{KB:BORVOL}).
\end{remark}

However, the stationary sequence~$\{X_i\}$ in~\cite{Bystr:BORVOL}
must satisfy a stronger $\psi$-mixing condition.
In the same time, in contrast to the present paper,
condition~(\ref{abs:BORVOL}) and the regularity conditions for the kernel and the basis functions of  Theorem~2 were replaced in~\cite{Bystr:BORVOL} with the more natural condition of integrability of the kernel on all the diagonal subspaces.
Note that the above-mentioned regularity condition and ~(\ref{abs:BORVOL})
imply the boundedness of the kernel under consideration, i.~e.,
 the above-mentioned condition of the kernel integrability on the diagonal subspaces in~\cite{Bystr:BORVOL} is fulfilled.

It is not clear importance of restriction~(\ref{abs:BORVOL})
outside the diagonal subspaces (i.~e.,~on the set of all pairwise distinct subscripts) to approximate~$U$- and $V$-statistics of an arbitrary order for dependent trials.

Mention also the important particular case when the bivariate kernel
of a $V$-statistic is represented as an inner product $f(x,y)=(x,y)$ in
a separable Hilbert space. In this case, the corresponding Von Mises statistic coincides with the Euclidian norm squared of a normalized sum of weakly dependent centered observations and we deal with the Central Limit Theorem (with respect to the class of all centered balls) for Hilbert-space-valued weakly dependent observation which was proved under various mixing conditions and the existence of the moment~${\mathbb E}(X_1,X_1)$ (for example, see~\cite{tikh:BORVOL}).

\section{Proof of Theorems~1 and~2 and Propositions~2 and~4}   

First of all, we formulate the following two auxiliary assertions which are
versions of the classical Rosenthal moment inequality for sums of independent random variables.

\begin{theorem}[A {\rm{{\cite{utev:BORVOL}}}}]                  
Let $\xi_i$~be a sequence of centered random variables having finite moments of order~$t\geq 2$ and satisfying $\varphi$-mixing with the restriction
$\varphi:=\sum_{k=1}^{\infty}\varphi^{1/2}(2^k)<\infty$. Then, for $t\geq 2$,
the following inequality holds:
$$
\me\max_{1\leq k\leq n}|S_k|^t
\leq\big(tc(\varphi)
    \big)^t
    \left(\sum_{i=1}^{n}\me |\xi_i|^t+
          \Bigg(\sum_{i=1}^{n}\me |\xi_i|^2
          \Bigg)^{t/2}
    \right),
$$
where the constant $c(\varphi)$ depends on~$\varphi$  only.
\end{theorem}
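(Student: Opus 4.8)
The plan is to establish first the bound \emph{without} the maximum, i.e.\ for $\me|S_n|^t$ alone, and then to recover the maximal version from it; the essential point is that the constant may be taken to depend on $t$ and on $\varphi:=\sum_k\varphi^{1/2}(2^k)$ but not on $n$, which is precisely what the assumed dyadic summability of $\varphi^{1/2}$ makes possible.

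The first auxiliary ingredient is the second--moment estimate $\me S_n^2\le c_0(\varphi)\sum_{i=1}^n\me\xi_i^2$. One starts from the $\varphi$--mixing covariance bound $|\me\xi_i\xi_j|\le 2\varphi^{1/2}(|i-j|)\,\|\xi_i\|_2\|\xi_j\|_2\le\varphi^{1/2}(|i-j|)\bigl(\me\xi_i^2+\me\xi_j^2\bigr)$ and then re--sums over the lags dyadically: the lags in each block $[2^m,2^{m+1})$ are treated together, using monotonicity of $\varphi$ together with a telescoping of the partial variances, so that $\sum_k\varphi^{1/2}(k)$ --- which need not converge --- is replaced by a quantity controlled by $\sum_m\varphi^{1/2}(2^m)$. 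This ``group into dyadic blocks and telescope'' device recurs in every later step.

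The core of the argument is the inequality $\me|S_n|^t\le\bigl(tc(\varphi)\bigr)^t\bigl(\sum_i\me|\xi_i|^t+(\sum_i\me\xi_i^2)^{t/2}\bigr)$, which I would obtain by the standard truncation (Fuk--Nagaev type) scheme used to prove Rosenthal's inequality, with independence replaced by a $\varphi$--mixing exponential inequality. Splitting each $\xi_i$ at a suitable level into a bounded part and a ``large'' part, the large parts are controlled crudely via $\mathbb P(\exists\, i\le n:\ |\xi_i|>\lambda)\le\lambda^{-t}\sum_i\me|\xi_i|^t$ and, after integration against $t\lambda^{t-1}\,d\lambda$, yield the summand $\sum_i\me|\xi_i|^t$; the bounded sum $S_n^{\mathrm{bd}}$ is controlled by a Bernstein--type inequality for $\varphi$--mixing of the shape $\mathbb P\bigl(|S_n^{\mathrm{bd}}|>\lambda\bigr)\le 2\exp\!\bigl(-\lambda^2/(c_1 V+c_2\lambda)\bigr)$ with $V\le c_0(\varphi)\sum_i\me\xi_i^2$, whose contribution after integration is of order $\Gamma(t/2)\,V^{t/2}$ and is absorbed into $\bigl(tc(\varphi)\bigr)^t(\sum_i\me\xi_i^2)^{t/2}$ (the crude comparison $(tc)^{t/2}\le(tc)^t$ is enough). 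Adding the two contributions gives the inequality with a constant of the announced form.

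I expect the main obstacle to be precisely this $\varphi$--mixing Bernstein inequality with a modulus depending on $\varphi$ only, under the merely dyadic rate $\sum_m\varphi^{1/2}(2^m)<\infty$. I would prove it by the classical block method: cut $\{1,\dots,n\}$ into alternating long and short intervals whose lengths grow dyadically, and estimate the moment generating function $\me\exp\!\bigl(\theta\sum_r S_{I_r}\bigr)$ over the long blocks $I_r$ by iterating the $\varphi$--mixing fact that $\bigl\|\me\bigl(Y\mid\mathfrak M_1^j\bigr)\bigr\|_1$ is small (controlled by $\varphi(k)$) when $Y$ is centered and measurable with respect to the $\sigma$--field after a gap of length~$k$; this makes the long--block chain behave, in the moment generating function, like an independent chain up to a multiplicative error $\prod_m\bigl(1+c\varphi^{1/2}(2^m)\bigr)\le e^{c\varphi}<\infty$, while the short blocks are absorbed by the second--moment bound of the previous step. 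Finally, since the inequality for $\me|S_n|^t$ applies verbatim to every increment $\me|S_b-S_a|^t$ (the shifted sequence is again $\varphi$--mixing with no larger coefficients), the maximal version $\me\max_{k\le n}|S_k|^t$ is obtained by running the same block/truncation machinery with the maximum carried along, or by quoting a $\varphi$--mixing maximal moment inequality valid under the same hypotheses; keeping track of the constants through all these steps leaves the final constant of the form $\bigl(tc(\varphi)\bigr)^t$.
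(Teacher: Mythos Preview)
The paper does not prove Theorem~A at all: it is quoted verbatim from Utev~\cite{utev:BORVOL} as an auxiliary tool and used as a black box in the proof of Theorem~1. There is therefore nothing to compare your proposal against in this paper; any assessment of the argument would have to be made against Utev's original proof, not against anything here.

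That said, as a standalone plan your outline is broadly in the right spirit (truncation plus an exponential bound for the bounded part, with the dyadic condition $\sum_k\varphi^{1/2}(2^k)<\infty$ exploited via block decomposition), but two places would need real work before it could be called a proof. First, the Bernstein inequality you invoke for $\varphi$-mixing under only the dyadic summability assumption is the genuine technical core; the heuristic ``long blocks behave independently up to $\prod_m(1+c\varphi^{1/2}(2^m))$'' hides the fact that the block lengths and gap lengths must be chosen simultaneously so that both the short-block contribution and the decoupling error stay under control, and doing this with a constant depending only on $\varphi$ is exactly Utev's contribution. Second, the passage from $\me|S_n|^t$ to $\me\max_{k\le n}|S_k|^t$ is not free: you cannot simply ``quote a $\varphi$-mixing maximal moment inequality valid under the same hypotheses'' without circularity, since that is precisely the statement being proved. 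Utev handles the maximal version by a recursive dyadic splitting of $\{1,\dots,n\}$ that feeds the moment bound back into itself; your plan gestures at this but does not supply the mechanism.
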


\begin{theorem}[� {\rm{{\cite{doukhan:BORVOL}}}}]                    
Let $\xi_i$~be a sequence of centered random variables with finite moments of order~$t\geq 2$ satisfying $\alpha$-mixing and let,
for some $\varepsilon>0$ and even~$c\geq t$, the following condition be fulfilled:
$$
\sum_{k=1}^{\infty}k^{c-2}\alpha^{\varepsilon/(c+\varepsilon)}(k)<\infty.
$$
Then there exists a constant~$C$ depending on~$t$ and the mixing coefficient~$\alpha(k)$, such that
$$
\me |S_n|^t\leq C
\max
\left(\sum_{k=1}^{n}
\big(\me |\xi_k|^{t+\varepsilon}
\big)^{t/(t+\varepsilon)},
     \Bigg(\sum_{k=1}^n\big(\me |\xi_k|^{2+\varepsilon}
                       \big)^{2/(2+\varepsilon)}
     \Bigg)^{t/2}
\right).
$$
\end{theorem}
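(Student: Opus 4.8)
The plan is to reproduce, in the $\alpha$-mixing setting, the structure of Rosenthal's classical inequality for independent summands, the only new ingredient being a quantitative substitute for independence. That substitute is Davydov's covariance inequality (in Rio's sharp form) together with its iterated version for a product split into two or more groups: if $U$ is measurable with respect to $\mathfrak{M}_1^{j}$ and $V$ with respect to $\mathfrak{M}_{j+k}^{\infty}$, then for any H\"older triple $1/p+1/q+1/r=1$ one has $|\me UV-\me U\,\me V|\le C\,\alpha(k)^{1/r}\,\|U\|_p\,\|V\|_q$, and analogously for longer products. One may assume all $\me|\xi_k|^{t+\varepsilon}<\infty$, since otherwise the right-hand side is infinite and there is nothing to prove.

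The heart of the argument is the case in which $t$ is the even integer $c$ (the general case $2\le t\le c$ then follows by a routine, if slightly tedious, combination of Lyapunov's inequality applied to $S_n$ and a truncation of the $\xi_k$'s at a level tuned to the finite moment $\me|\xi_k|^{t+\varepsilon}$). For even $c$ I would expand $\me S_n^{c}=\sum_{i_1,\dots,i_c=1}^{n}\me\bigl(\xi_{i_1}\cdots\xi_{i_c}\bigr)$ and classify each $c$-tuple by the \emph{cluster structure} of its index multiset: sort the indices, split the product at the largest consecutive gap, bound the resulting covariance by the inequality above with the $\alpha$-factor evaluated at that gap and the moment factors taken in $L^{c+\varepsilon}$ (so that $1/r=\varepsilon/(c+\varepsilon)$), and recurse on each part. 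A tuple contributes a non-negligible bound only if, once broken into clusters, every cluster has total multiplicity at least $2$ (an index isolated by a gap is killed by $\me\xi_i=0$); the admissible cluster shapes are exactly those in Rosenthal's theorem, and the two extreme ones --- all clusters of size $2$, respectively a single cluster of size $c$ --- generate the terms $\bigl(\sum_k(\me|\xi_k|^{2+\varepsilon})^{2/(2+\varepsilon)}\bigr)^{c/2}$ and $\sum_k(\me|\xi_k|^{c+\varepsilon})^{c/(c+\varepsilon)}$, with every intermediate shape dominated, via H\"older, by the maximum of these two. For a fixed split point and gap scale $k$ the number of contributing $c$-tuples grows only polynomially in $k$, of order $k^{c-2}$, so the total weight of all ``gap at scale $k$'' configurations is $O\bigl(n\,k^{c-2}\alpha(k)^{\varepsilon/(c+\varepsilon)}\bigr)$ times a moment product --- and this is precisely where the hypothesis $\sum_k k^{c-2}\alpha^{\varepsilon/(c+\varepsilon)}(k)<\infty$ is used, rendering the bound independent of $n$.

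The main obstacle is the combinatorial bookkeeping in this second step: one has to organize the recursion over split points and over gap scales so that the moment factors $\|\xi_i\|_{c+\varepsilon}$ harvested from the various clusters recombine into exactly the two products in the statement, and so that the residual ``no large gap'' tuples --- those whose indices are packed into a short interval, where no decay factor is gained --- are absorbed into the $\sum_k(\me|\xi_k|^{c+\varepsilon})^{c/(c+\varepsilon)}$ term rather than producing a spurious $n\max_k(\cdots)$ term. A cleaner alternative that sidesteps part of this is the blocking scheme of Rio: prove first a maximal version by a dyadic chaining, and at each dyadic scale couple the block sums to independent surrogates, applying the classical independent-case Rosenthal inequality to the surrogates and controlling the coupling error by the mixing rate; the hypothesis on $\alpha$ is once more exactly what makes the resulting error series converge. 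Either way, the probabilistic content is minimal and essentially all the work is in the estimates.
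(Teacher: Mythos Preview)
The paper does not prove this statement at all: Theorem~B is quoted verbatim from Doukhan's monograph \cite{doukhan:BORVOL} and is used as a black box in the proof of Theorem~1, so there is no ``paper's own proof'' to compare against. Your sketch --- expand $\me S_n^c$ for even $c$, split products at the largest gap via the Davydov/Rio covariance inequality with exponent $\varepsilon/(c+\varepsilon)$, classify by cluster structure, and sum the gap contributions using the hypothesis $\sum_k k^{c-2}\alpha^{\varepsilon/(c+\varepsilon)}(k)<\infty$ --- is exactly the standard argument that Doukhan records, so in that sense you are aligned with the cited source rather than diverging from it.

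One caution on the part you flag as ``routine, if slightly tedious'': passing from the even-integer case $t=c$ down to arbitrary $t\in[2,c]$ is not quite a one-line Lyapunov interpolation, because the right-hand side involves the individual $(t+\varepsilon)$-moments of the $\xi_k$ rather than a single global moment, and a naive interpolation would produce $(c+\varepsilon)$-moments instead. The usual fix (and the one in Doukhan) is a truncation $\xi_k=\xi_k'+\xi_k''$ at a level depending on $k$ and on the target norm, followed by separate treatment of the bounded and the tail parts; this is where the exponent $t/(t+\varepsilon)$ actually appears. Your proposal mentions truncation in passing, so you are aware of the issue, but it is the one place where the argument is genuinely more than bookkeeping.
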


\begin{proof}[Proof of Theorem\/~{\rm 1}]
Consider the following partial sum for the above-mentioned multiple series expansion of the kernel~$f(t_1,\dots,t_m)$:
$$
f_N(t_1,\dots,t_m):=
\sum_{1\leq i_1,\dots,i_m\leq N}
f_{i_1,\dots,i_m}e_{i_1}(t_1)\cdots\,e_{i_m}(t_m).
$$
It is clear that, due to linearity of the functional~$U_n(\boldsymbol{\cdot})$,
we have
$$
U_n(f_N)=\sum_{1\leq i_1,\dots,i_m\leq N}f_{i_1,\dots,i_m}
U_n(e_{i_1}\cdots\,e_{i_m}),
$$
where
$$
U_n(e_{i_1}\cdots\,e_{i_m}):=n^{-m/2}
\sum_{1\leq j_1\neq\cdots\,\neq j_m\leq n}
e_{i_1}(X_{j_1})\cdots\,e_{i_m}(X_{j_m}).
$$
Further, by analogy with the above-mentioned arguments for independent observations, we represent the
$U$-statistic as a sum of Von Mises statistics in which of them the summation is taken over all the subscripts (not necessarily pairwise distinct).
Further, changing the order of summation, we start to study the random variables
$$
\frac{1}{\sqrt{n}}
\sum_{j=1}^ne_i(X_j),
\ \frac{1}{n}\sum_{j=1}^ne_{i_1}(X_j)e_{i_2}(X_j),\,\dots,\,
\frac{1}{n^{k/2}}
\sum_{j=1}^ne_{i_1}(X_j)\cdots\,e_{i_k}(X_j).
$$
For each~$k>2$, these sums converge to zero in probability  as $n\rightarrow\infty$ since, by condition~(\ref{fibase:BORVOL}), there exists the finite moment $\me\big\vert e_{i_1}(X_j)\cdots\,e_{i_k}(X_j)\big\vert$.
Hence, by the law of large numbers for weakly dependent random variables,
the following convergence is valid:
$$
\frac1n
\sum_{j=1}^ne_{i_1}(X_j)
\cdots\,e_{i_k}(X_j)\to\me e_{i_1}(X_j)
\cdots\,e_{i_k}(X_j)\ \ \mbox{in probability}.
$$
So, for $k>2$, we obtain
$$
\frac1{n^{k/2}}
\sum_{j=1}^ne_{i_1}(X_j)
\cdots\,e_{i_k}(X_j)\to 0
\ \ \mbox{in probability}.
$$
Therefore, the summands containing such sums as factors, converge to zero in probability  as well.
For~$k=2$, we also apply the law of large numbers to the sums mentioned. Due to orthonormality of the basis, this limiting values coincide with the Kronecker symbol~$\delta_{i,k}$.
Therefore, the limiting form of the partial sum is quite similar to that in the i.i.d. case:
$$
U_n(f_N)\stackrel{d}\rightarrow
\sum_{1\leq i_1,\dots,i_m\leq N}f_{i_1,\dots,i_m}
\prod_{j=1}^{\infty}H_{\nu_j(i_1,\dots,i_m)}(\tau_j)=:\eta_{\infty N},
$$
where the random variables~$\{\tau_k\}$ have joint Gaussian distributions with the covariance matrix defined in~(\ref{cov:BORVOL}). The last assertion is a consequence of the multivariate Central Limit Theorem for finite-dimensional projections of empirical processes based on stationary observations and indexed by the family of functions~$\{e_k\}$.
In turn, the last result follows from the univariate CLT for stationary sequences and from the Cram{\'e}r -- Wold method to reduce the multivariate case to the univariate one
\cite[Theorem~20.1]{Billingsley:BORVOL}.

Further, using the well-known covariance inequalities for stationary sequences, we prove  finiteness of the covariance we need.

(a) For $\varphi$-mixing, from \cite[�������~17.2.3]{Ibragim:BORVOL} and
condition~(\ref{fi:BORVOL}) it follows that
\begin{align*}
|\me\tau_k\tau_l|
&\leq\big\vert\me e_k(X_1)e_l(X_1)
     \big\vert+
\sum_{j=1}^{\infty}
\Big[\big\vert\me e_k(X_1)e_l(X_{j+1})
     \big\vert+
     \big\vert\me e_l(X_1)e_k(X_{j+1})
     \big\vert
\Big]\\
&\leq\delta_{k,l}+2\sum_{j=1}^{\infty}\varphi^{1/2}(j)<\infty.
\end{align*}

(b) For $\alpha$-mixing, due to \cite[�������~17.2.2]{Ibragim:BORVOL} and
condition~(\ref{alpha:BORVOL}), we have
\begin{align*}
|\me\tau_k\tau_l|
&\leq\big\vert\me e_k(X_1)e_l(X_1)
     \big\vert+
\sum_{j=1}^{\infty}
\Big[\big\vert\me e_k(X_1)e_l(X_{j+1})
     \big\vert+
     \big\vert\me e_l(X_1)e_k(X_{j+1})
     \big\vert
\Big]\\
&\leq\delta_{k,l}+C
\sum_{j=1}^{\infty}\alpha^{\varepsilon/(2+\varepsilon)}(j)<\infty.
\end{align*}
Notice that, in cases~(a) and~(b), the covariances~$\me\tau_k\tau_l$
{\it uniformly bounded}.

We now represent the prelimit and limit statistics in such a way:
\begin{gather*}
U_n(f)=U_n(f_N)+U_n(f-f_N),\\
\noalign{\vskip7pt}
\eta_{\infty}=\eta_{\infty N}+\overline{\eta_{\infty N}},
\end{gather*}
where
$$
\overline{\eta_{\infty N}}:=
\sum_{\max\{i_k\}>N}f_{i_1,\dots,i_m}
\prod_{j=1}^{\infty}H_{\nu_j(i_1,\dots,i_m)}(\tau_j).
$$

Prove that,  as $N\rightarrow\infty$, the random variables~$U_n(f-f_N)$ and~$\overline{\eta_{\infty N}}$
converge to zero in mean. It is clear that
$$
\me\big\vert\overline{\eta_{\infty N}}
          \big\vert\leq
\sum_{\max\{i_k\}>N}|f_{i_1,\dots,i_m}|\cdot\me
\Bigg\vert
\prod_{j=1}^{\infty}H_{\nu_j(i_1,\dots,i_m)}(\tau_j)
\Bigg\vert,
$$
where, as was noted before,
$\prod_{j=1}^{\infty}H_{\nu_j(i_1,\dots,i_m)}(\tau_j)$~is a polynomial of order~$m$ of the variables
 $\tau_{j_1},\dots,\tau_{j_s}$ \big(see the notation
in~(\ref{represent:BORVOL})\big), and all the coefficients of the polynomial depending on the multiplicity~$r_l$ of subscripts in a fixed collection
$(i_1,\dots,i_m)$, have a universal upper bound depending only on~$m$.
The absolute moment of each monomial of this polynomial is bounded uniformly in~$i_1,\dots,i_m$. It suffices to show this property for the high-order member of the polynomial. Instead, by H{\"o}lder's inequality and relation $\sum_{l\le s}r_l=m$, we have
$$
\me\big\vert\tau^{r_1}_{j_1}\cdots\,\tau^{r_s}_{j_s}
          \big\vert
\leq\big(\me |\tau_{j_1}|^m
    \big)^{r_1/m}\cdots\,
    \big(\me |\tau_{j_s}|^m
    \big)^{r_s/m}.
$$
Since the variances of the limit Gaussian random variables~$\tau_i$ are uniformly bounded, their absolute moments of order~$m$ are uniformly bounded as well. Hence, the moment
$\me\big\vert\prod_{j=1}^{\infty}H_{\nu_j(i_1,\dots,i_m)}(\tau_j)
\big\vert$
is bounded {\it uniformly over all\/}  $i_1,\dots,i_m$. Therefore, condition~(\ref{abs:BORVOL}) implies the limit relation
$\me\big\vert\overline{\eta_{\infty N}}\big\vert\rightarrow 0$ as
$N\rightarrow\infty$.

Consider now the series tail in the expansion of $U$-statistic.  We have
\begin{align*}
\me\big\vert U_n(f-f_N)
          \big\vert
&=n^{-m/2}\me
\Bigg\vert\sum_{1\leq j_1\neq\cdots\,\neq j_m\leq n}
     \big(f(X_{j_1},\dots,X_{j_m})-f_N(X_{j_1},\dots,X_{j_m})
     \big)
\Bigg\vert\\
&=n^{-m/2}\me
\Bigg\vert\sum_{1\leq j_1\neq\cdots\,\neq j_m\leq n}
          \sum_{\max\{i_k\}>N}f_{i_1,\dots,i_m}
                             e_{i_1}(X_{j_1})\cdots\,e_{i_m}(X_{j_m})
\Bigg\vert\\
&=n^{-m/2}\me
\Bigg\vert\sum_{\max\{i_k\}>N}
          \sum_{1\leq j_1\neq\cdots\,\neq j_m\leq n}
          f_{i_1,\dots,i_m}
          e_{i_1}(X_{j_1})\cdots\,e_{i_m}(X_{j_m})
\Bigg\vert\\
&\leq n^{-m/2}\sum_{\max\{i_k\}>N}|f_{i_1,\dots,i_m}|\me
\Bigg\vert\sum_{1\leq j_1\neq\cdots\,\neq j_m\leq n}\!\!\!\!
          e_{i_1}(X_{j_1})\cdots\,e_{i_m}(X_{j_m})
\Bigg\vert\\
&=\sum_{\max\{i_k\}>N}|f_{i_1,\dots,i_m}|\me
\Bigg\vert n^{-m/2}\!\!\!\!\sum_{1\leq j_1\neq\cdots\,\neq j_m\leq n}
  e_{i_1}(X_{j_1})\cdots\,e_{i_m}(X_{j_m})
\Bigg\vert.
\end{align*}
Consider the value
\begin{equation}\label{dopred:BORVOL}          
n^{-m/2}
\sum_{1\leq j_1\neq\cdots\,\neq j_m\leq n}
e_{i_1}(X_{j_1})\cdots\,e_{i_m}(X_{j_m}).
\end{equation}
Due to summability of the coefficients~$|f_{i_1,\dots,i_m}|$, it suffices to prove the uniform boundedness over all $i_1,\dots,i_m$ of the value in~(\ref{dopred:BORVOL}). As before, we add and subtract into the multiple sum in ~(\ref{dopred:BORVOL}) summations over diagonal subspaces obtaining a linear combination of monomials of the form
$$
\frac{\sum_{j=1}^ne_{l_1}(X_j)\cdots\,e_{l_{m_1}}(X_j)}
     {n^{m_1/2}}
\times\dots\times
\frac{\sum_{j=1}^ne_{l_{m_1+\cdots\,+m_{k-1}+1}}(X_j)\cdots\,e_{l_{m}}(X_j)}
     {n^{m_k/2}},
$$
where $(l_1,\dots,l_m)$~is a permutation of the multiindex
$(i_1,\dots,i_m)$ and the natural number~$m_i$ satisfies  the relation
$m_1+\cdots\,+m_k=m$, $k\le m$, and the number of  such monomials is bounded by a number depending on~$m$ only. Thus, proving the uniform boundedness
of  the absolute moment of such monomials we now prove the uniform boundedness of the absolute moment of the value in~(\ref{dopred:BORVOL}). We have
\begin{align*}
&\me
\Bigg\vert
      \frac{\sum_{j=1}^ne_{l_1}(X_j)\cdots\,e_{l_{m_1}}(X_j)}
           {n^{m_1/2}}
\times\dots\times
      \frac{\sum_{j=1}^ne_{l_{m_1+\cdots+m_{k-1}+1}}(X_j)\cdots\,e_{l_m}(X_j)}
           {n^{m_k/2}}
\Bigg\vert\\
&\qquad\leq\prod_{r=1}^{k}
\left(\me
      \Bigg\vert\frac{\sum_{j=1}^ne_{l_{m_1+\cdots+m_{r-1}+1}}
                     (X_j)\cdots\,e_{l_{m_1+\cdots+m_r}}(X_j)}
                     {n^{m_r/2}}
      \Bigg\vert^{m/m_r}
\right)^{m_r/m},
\end{align*}
where $m_0=0$.
Prove that each of the factors above is uniformly bounded. Without loss of generality, one can verify this only for the first factor. Since in general the summands $e_{l_1}(X_j)\cdots\,e_{l_{m_1}}(X_j)$ are not centered, we consider  separately the following two cases.

First, let $m_1=1$. In this case, we estimate moments of a sum of centered weakly dependent random variables. Under $\varphi$-mixing, we use Theorem~A. Then
\begin{align*}
\me
\Bigg\vert\frac1{\sqrt{n}}
          \sum_{j=1}^ne_i(X_{j})
\Bigg\vert^{m}
&\!\!\!\leq C_0(m)
\Bigg(\bigg(\frac1n
            \sum_{j=1}^n\me
            \big(e_i(X_j)
            \big)^2
      \bigg)^{m/2}\hskip-2.8mm+n^{-m/2}
      \sum_{j=1}^{n}\me
      \big\vert e_i(X_{j})
      \big\vert^m
\Bigg)\\
&\leq C_0(m)\big(1+n^{-(m/2+1)}C(m)
            \big)\\
&\leq C_0(m)\big(1+C(m)
             \big).
\end{align*}
Under $\alpha$-mixing, by Theorem~B, we obtain
\begin{align*}
\me
\Bigg\vert\frac{1}{\sqrt{n}}
          \sum_{j=1}^{n}e_{i}(X_{j})
\Bigg\vert^{m}
&\leq C_0(m)
\Bigg(\frac{1}{n}
      \sum_{j=1}^n
      \Big(\me\big\vert e_i(X_j)
                     \big\vert^{2+\varepsilon}
      \Big)^{2/(2+\varepsilon)}
\Bigg)^{m/2}\\
&\qquad+C_0(m)n^{-m/2}
\sum_{j=1}^n
\Big(\me\big\vert e_i(X_j)
               \big\vert^{m+\varepsilon}
\Big)^{m/(m+\varepsilon)}\\
&\leq C_1(m).
\end{align*}

Now, let $m_1\geq 2$. Then, by the arithmetical convexity inequality
for sums and by H{\"o}lder's  inequality, we obtain the following simple estimate:
\begin{align*}
\me
\Bigg\vert\frac{\sum_{j=1}^ne_{l_1}(X_j)\cdots\,e_{l_{m_1}}(X_j)}
               {n^{m_1/2}}
\Bigg\vert^{m/m_1}
\leq n^{m/m_1}n^{-m/2}\me
\big\vert e_{l_1}(X_1)\cdots\,e_{l_{m_1}}(X_1)
\big\vert^{m/m_1}
\le\sup_i\me\big\vert e_{i}(X_1)
                    \big\vert^m.
\end{align*}
Thus,
$$
\me\big\vert U_n(f-f_N)
          \big\vert
\leq C\sum_{\max\{i_k\}>N}
\big\vert f_{i_1,\dots,i_m}
\big\vert\rightarrow 0\quad\mbox{as}\quad N\to\infty.
$$
Theorem~1 is proved.
\end{proof}

\begin{proof}[Proof of Theorem\/~{\rm 2}]
Without loss of generality, we assume that the separable metric space~$\mathfrak X$ is the support of the distribution~$F$. It means that the space~$\mathfrak X$ does not contain open balls having zero
$F$-measure. Since all the basis functions~$e_k(t)$ in~(\ref{kernel:BORVOL})
are continuous and bounded uniformly in~$t$ and~$k$,
 by the Lebesgue theorem
on dominated convergence, these  above facts imply continuity of the series in~(\ref{kernel:BORVOL}) under condition~(\ref{abs:BORVOL}). It is easy to see that, in this case, the equality in~(\ref{kernel:BORVOL}) turns into the identity of the variables
$t_1,\dots,t_m$ since equality of two functions on an everywhere dense subset of~$\mathfrak X$ implies their coincidence everywhere in~$\mathfrak X$.
Therefore, we can replace the variables $t_1,\dots,t_m$ in identity~(\ref{kernel:BORVOL}) with {\it arbitrarily dependent\/} observations,
in particular, coincident ones. This is a principal distinction from the proof of the previous theorem. Hence, for the {\it all elementary events\/}, the following representation holds:
$$
V_n(f)=\sum_{i_1,\dots,i_m=1}^{\infty}f_{i_1,\dots,i_m}n^{-1/2}
       \sum_{j=1}^ne_{i_1}(X_j)\cdots\,n^{-1/2}
       \sum_{j=1}^ne_{i_m}(X_j).
$$
Further arguments actually repeat the arguments
(even in a simpler version) of the proof of Theorem~1.
Theorem~2 is proved.
\end{proof}

\begin{proof}[Proof of Proposition\/~{\rm 2}]
Let $\{Y_i;\,i\ge 1\}$~be a sequence of independent random variables uniformly distributed on~$[-1,1]$, and let
$\{\xi_i;\,i\ge 1\}$~be a sequence of independent symmetric Bernoulli random variables
which are independent of~$\{Y_i\}$ as well. Set $X_i=Y_{i+\xi_i}$. The random variables~$\{X_i\}$
form a stationary $1$-dependent sequence. Notice that, in this case, the random variables~$X_i$ are uniformly distributed on~$[-1,1]$ as well. Thus, the stationary sequence~$\{X_i\}$ satisfies $\varphi$-mixing condition and the restriction in~(\ref{fi:BORVOL}). It is clear that, for the corresponding independent copies~$X_1^*$ and~$X_2^*$, we have
$P(X_1^*=X_2^*)=0$, but for the originals, we obtain
$$
P(X_1=X_2)=P(\xi_1=1)P(\xi_2=0)=1/4.
$$
Notice that, in the example under consideration, the basis functions and the coefficients in the series expansion in~(\ref{kernel:BORVOL}) do not depend on the values of the kernel~$f$ on the diagonal due to continuity of the marginal distribution. Let
$$
\sup_{|t_1|,|t_2|\le 1}\big\vert f(t_1,t_2)
                       \big\vert\le 1.
$$
Now, change the diagonal values setting $f(t,t)\equiv 1+\beta$,
for all $t\in[-1,1]$, where~$\beta>0$. Then
$P\big(f(X_1,X_2)=1+\beta\big)=1/4$. In the same time, the series on the right-hand side of~(\ref{kernel:BORVOL}) does not depend on~$\beta$.

It is easy to show that, in the example under consideration, the limit law  will essentially differs from~(\ref{eagleresult:BORVOL}).
Instead, represent the bivariate  $U$-statistic with the symmetric canonical kernel~$f(t_1,t_2)$ in the form
\begin{equation} \label{U:BORVOL}                      
U_n=\frac2n
    \sum_{i<n}f(X_i,X_{i+1})+{\tu}_n.
\end{equation}

Limit behavior of the first sum on the right-hand side~(\ref{U:BORVOL})
is simply studied since the random variables
$\big\{f(X_i,X_{i+1});\,i\ge 1\big\}$ form $2$-dependent
stationary sequence for which the strong law of large numbers is valid.  Hence, as $n\to\infty$,
the limit relation
\begin{equation}\label{diagonal:BORVOL}              
\frac2n
\sum_{i<n}f(X_i,X_{i+1})\to 2{\mathbb E}f(X_1,X_{2})=
{\mathbb E}f(Y_1,Y_{1})/2
\end{equation}
holds with probability~$1$. Under the above-mentioned restrictions on the kernel, the limit equals~$\frac{1+\beta}{2}$.

The statistic~${\tu}_n$ is based on the random variables~$f(X_i,X_j)$ under the condition $|i-j|\ge 2$ which provides independence of the random variables~$X_i$ and~$X_j$. Since the distribution of~${\tu}_n$ (hence, and the limit one if it exists)  does not depend on~$\beta$,
this proves the assertion.

We now compute the above-mentioned limit law  for the sequence~$\{{\tu}_n\}$ under absolutely summability of the sequence $\{\lambda_k\}$ in the series expansion in~(\ref{eagl:BORVOL}) for the kernel~$f(t_1,t_2)$ under consideration.
Notice that the kernel with the indicated properties exists (see the example
after Remark~2 in the previous section).
We note once again that, in the example under consideration, all the eigenvalues~$\lambda_k$ and all the basis functions~$e_k(t)$ do not depend on the diagonal values of the kernel (i.~e., do not depend on~$\beta$
under the additional above-mentioned restrictions on the kernel). Since the statistic~${\tu}_n$ consists only of the random values~$f(X_i,X_j)$ under the condition~$|i-j|\ge 2$ which provides independence of the random variables~$X_i$ and~$X_j$, we can replace~$f(X_i,X_j)$ with the corresponding double series from~(\ref{eagl:BORVOL}). Then
\begin{align}\label{limit:BORVOL}                      
{\tu}_n
&=\frac1n
  \sum_{i,j\le n:\,|i-j|\ge 2}\,
  \sum_{k\ge 1}\lambda_ke_k(X_i)e_k(X_j)\nonumber\\
&=\sum_{k\ge 1}\lambda_k
\left\{\bigg(\frac{1}{\sqrt n}
             \sum_{i\le n}e_k(X_i)
       \bigg)^2-\frac2n
                \sum_{i<n}e_k(X_i)e_k(X_{i+1})-
                \frac1n
                \sum_{i\le n}e^2_k(X_i)
\right\}\nonumber\\
&\qquad\stackrel{d}\rightarrow\sum_{k\ge 1}\lambda_k(\tau^2_k-3/2),
\end{align}
where $\{\tau_k\}$~is a centered Gaussian sequence with covariances~(\ref{cov:BORVOL}). To compute the weak limit as $n\to\infty$,
we use, first, the fact of the weak convergence of the collection of sequences
(in other words, their finite-dimensional distributions)
$\big\{\frac1{\sqrt n}\sum_{i\le n}e_k(X_i);\,k\ge 1\big\}$ to the Gaussian limit~$\{\tau_k;\,k\ge 1\}$, second, the laws of large numbers both for the $1$-dependent and
$2$-dependent stationary sequences $\{e^2_k(X_i);\,i\ge 1\}$ and
$\big\{e_k(X_i)e_k(X_{i+1});\,i\ge 1\big\}$ respectively, taking into account Proposition~3  as well as  the relations  ${\mathbb E}e^2_k(X_1)=1$ and
${\mathbb E}e_k(X_1)e_k(X_2)=1/4$ for all~$k\ge 1$.
Moreover, we used simple upper bounds for the absolute moments of remainders of the series in~(\ref{limit:BORVOL}). So, from~(\hbox{\ref{U:BORVOL})--(\ref{limit:BORVOL}}) we obtain the form of the limit law:
$$
U_n\stackrel{d}\rightarrow{\mathbb E}f(Y_1,Y_{1})/2+
\sum_{k\ge 1}\lambda_k
\left(\tau^2_k-3/2
\right).
$$
The right-hand side coincides with~(\ref{eagleresult:BORVOL}), say, under the conditions of Remark~2
(or of Theorem~2) but does not coincide under the above-mentioned restrictions on diagonal values of the kernel~$f$.
Proposition~2 is proved.
\end{proof}

\begin{proof}[Proof of Proposition\/~{\rm 4}]
Define the $1$-dependent stationary sequence~$\{X_i\}$ by the scheme of Proposition~2 using i.i.d. random variables~$\{Y_i\}$ with the following distribution (different from that in Proposition~2):
\begin{equation*}
Y_1=
\begin{cases}
\hfill k&\mbox{with probability}\,\, p_k=2^{-k-1},\\
      -k&\mbox{with probability}\,\, p_k=2^{-k-1}
\end{cases}
\end{equation*}
for all natural~$k$. We assume that $\mathfrak{X}$ coincides with the support of the distribution~$F$, i.~e., with the set of all nonzero integers. It is easy to verify that the stationary sequence~$\{X_i\}$ satisfies condition~$\AC$ since, under an arbitrary dependence of the components
of the vector $(X_{i_1},\dots,X_{i_m})$, the support of its distribution is contained in the support of the $m$-variate distribution of $(X^*_1,\dots,X^*_m)$.

Consider in $L_2(\mathfrak{X},F)$ the sequence of functions
\begin{gather*}
e_0\equiv 1;\\
\noalign{\vskip3pt}
e_k(t)=\begin{cases}
\hfill{2^{k/2}} &\mbox{if}\ t=k;\\
    - {2^{k/2}} &\mbox{if}\ t=-k;\\
\hfill      0     &\mbox{otherwise}.
       \end{cases}
\end{gather*}
It is clear that this sequence is an orthonormal basis in the space~$L_2(\mathfrak{X},F)$. Let
$\{\lambda_k\}_{k\geq 1}$~be an arbitrary sequence of positive numbers summable squared. Define a symmetric canonical kernel~$f(t,s)$
by the formula
$$
f(t,s):=\sum_{k=1}^{\infty}\lambda_ke_k(t)e_k(s).
$$
It is clear that
\begin{equation*}
f(k,l)=
\begin{cases}
\hfill\lambda_{|k|}2^{|k|} &\mbox{if}\ k=l;\\
     -\lambda_{|k|}2^{|k|} &\mbox{if}\ k=-l;\\
\hfill             0       &\mbox{otherwise.}
\end{cases}
\end{equation*}
Moreover, $\{\lambda_k\}$ and~$\big\{e_k(t)\big\}$ are the respective eigenvalues and eigenfunctions of the integral operator with the kernel~$f(t,s)$.

We now employ the expansion in~(23).
Using the same arguments as in~(25),
we obtain
\begin{equation}\label{series:BORVOL}            
{\tu}_n\stackrel{d}\rightarrow\sum_{k=1}^{\infty}\lambda_k(\tau_k^2-3/2),
\end{equation}
where $\{\tau_k\}$~is a Gaussian sequence with covariances~(\ref{cov:BORVOL})
which, in the case under consideration, are calculated in such a way:
$\me\tau_k\tau_l=0$ if~$k\neq l$,
and $\me\tau_k^2=3/2$, i.~e., the limit sequence consists of independent centered Gaussian random variables with the common variance~$3/2$. Therefore, by the Kolmogorov Three Series theorem (for example, see~\cite{GS:BORVOL}),
the series on the right-hand side of (\ref{series:BORVOL})
converges almost surely; we need  only summability squared
of the sequence $\{\lambda_k\}$ at that.
Consider in detail the limit behavior of the first sum in expansion~(23).
To transform this sum, use the following simple identity:
\begin{multline*}
f(X_i,X_{i+1})=\xi_i(1-\xi_{i+1})f(Y_{i+1},Y_{i+1})
+\xi_{i}\xi_{i+1}f(Y_{i+1},Y_{i+2})\\
+(1-\xi_i)\xi_{i+1}f(Y_i,Y_{i+2})+
(1-\xi_i)(1-\xi_{i+1})f(Y_{i},Y_{i+1}).
\end{multline*}
\goodbreak

\noindent Every term of the fourth summands on the right-hand side of this identity is the
$i$th member of the corresponding  $1$- or $2$-dependent stationary
sequence.
Hence, for each partial sum consisting of these summands except the first one, the strong law of large numbers is applicable and the corresponding normalized sums tend to zero almost surely due to degeneracy of the kernel.
However, the law of large numbers is not applicable to the sum consisting of the summands
$\nu_i:=\xi_i(1-\xi_{i+1})f(Y_{i+1},Y_{i+1})$
since the expectation $\mathbb{E}\nu_1=\frac{1}{4}\sum_k\lambda_k$
may be infinite.
In this case, the limit behavior of the first sum on the right-hand side of  (23) is described by the following

\begin{lemma} Let $\hskip-1pt \{\mu_k\}$ be a sequence of i.i.d. nonnegative  random variables with infinite expectations. Then
$$
\lim_{n\to\infty}\frac{1}{n}\sum_{k\le n}\mu_k=\infty.
$$
almost surely.
\end{lemma}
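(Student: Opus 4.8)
The plan is to prove the lemma by truncation and a reduction to the classical strong law of large numbers. The key observation is that a nonnegative random variable $\mu_1$ has infinite expectation if and only if $\me\min\{\mu_1,M\}\to\infty$ as $M\to\infty$ (by the monotone convergence theorem). So first I would fix an arbitrary level $M>0$ and introduce the truncated variables $\mu_k^{(M)}:=\min\{\mu_k,M\}$. These are i.i.d., bounded, hence integrable, so by Kolmogorov's strong law of large numbers
$$
\frac1n\sum_{k\le n}\mu_k^{(M)}\longrightarrow\me\mu_1^{(M)}\quad\text{almost surely}.
$$

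The second step is the trivial but crucial domination $\mu_k\ge\mu_k^{(M)}$ for every $k$, which gives
$$
\liminf_{n\to\infty}\frac1n\sum_{k\le n}\mu_k\ge\liminf_{n\to\infty}\frac1n\sum_{k\le n}\mu_k^{(M)}=\me\mu_1^{(M)}\quad\text{almost surely},
$$
the equality holding on a set of probability one depending on $M$. Taking the intersection over a countable sequence $M=1,2,\dots$ (still a probability-one event) and letting $M\to\infty$ along the integers, the right-hand side tends to $\me\mu_1=\infty$ by monotone convergence. Hence $\liminf_n\frac1n\sum_{k\le n}\mu_k=\infty$ almost surely, which is exactly the claimed limit.

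There is essentially no obstacle here; the only point requiring a word of care is to restrict the limit $M\to\infty$ to a countable set so that the almost-sure statements can be intersected, and to note that $\me\mu_1^{(M)}<\infty$ makes Kolmogorov's SLLN applicable even though $\me\mu_1=\infty$. I would close by remarking that, applied with $\mu_i=\nu_i=\xi_i(1-\xi_{i+1})f(Y_{i+1},Y_{i+1})$ (which are i.i.d.\ and nonnegative since the $\lambda_k$ are positive, with $\me\nu_1=\frac14\sum_k\lambda_k=\infty$ when this series diverges), the lemma shows that $\frac1n\sum_{i<n}f(X_i,X_{i+1})\to\infty$ almost surely, so the first sum in the decomposition~(23) of $U_n$ diverges while $\{\tu_n\}$ converges in distribution; consequently $U_n$ has no weak limit, which proves Proposition~4.
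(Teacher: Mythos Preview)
Your proof of the lemma itself is correct and follows a different route from the paper. The paper invokes the second Borel--Cantelli lemma: $\me\mu_1=\infty$ is equivalent to $\sum_k\mathbb{P}(\mu_k>Nk)=\infty$ for every fixed $N>0$, whence (by independence) $\mu_k>Nk$ infinitely often, and from this the paper infers $\liminf_n n^{-1}\sum_{k\le n}\mu_k\ge N$ almost surely. Your truncation-plus-SLLN argument is entirely self-contained and arguably cleaner, since the passage from ``$\mu_k>Nk$ i.o.'' to a lower bound on the $\liminf$ of the averages is left implicit in the paper, whereas your domination $\mu_k\ge\mu_k^{(M)}$ makes the $\liminf$ bound immediate.

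There is, however, a genuine slip in your final paragraph. The variables $\nu_i=\xi_i(1-\xi_{i+1})f(Y_{i+1},Y_{i+1})$ are \emph{not} i.i.d.: $\nu_i$ and $\nu_{i+1}$ both involve $\xi_{i+1}$, so the sequence $\{\nu_i\}$ is only $1$-dependent. The lemma as stated (and as you proved it) requires independence. The paper handles this by splitting $\sum_i\nu_i$ into the sums over even and over odd indices; each of these subsums consists of genuinely i.i.d.\ nonnegative terms with infinite mean, and the lemma then applies to each half separately. You need this extra splitting step before you can conclude that the first sum in the decomposition of $U_n$ diverges and hence that $U_n$ has no weak limit.
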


The {\it proof\/} of this simple assertion is a direct consequence of the Borel~--- Kantelli lemma since the condition
$\mathbb{E}\mu_1=\infty$ is equivalent to divergence of the series
$\sum_k\mathbb{P}(\mu_k>Nk)
$
for an arbitrary fixed $N>0.$ From here we immediately conclude that
$$\liminf_n\frac{1}{n}\sum_{k\le n}\mu_k>N$$
almost surely which was to be proved.

Notice that the requirement of nonnegativity of the summands is essential. The corresponding counterexample is constructed by i.i.d. random variables $\mu_k$ with the Cauchy distribution.

Since the nonnegative identically distributed random variables
$\{\nu_k\}$ form a $1$-dependent stationary sequence, splitting the corresponding partial sum into two ones (each of them consists of independent summands),
we easily reduce the problem to that  described above.

Thus, in the case under consideration,
the existence of a finite weak limit for the $U$-statistics
is equivalent to summability of the coefficients $\{\lambda_k\}$.
Proposition~4 is proved.
\end{proof}

{\vskip3.1ex plus 1ex minus.2ex
\begin{center}ACKNOWLEDGMENTS
\end{center}
\par\vskip1pt}

This research was
supported by the Russian Foundation for Basic Research
(grant~06--01--00738) and by the Ministry of Higher Education and Science
of the Russian Federation
(grant~���.2.1.1.1379).


\begin{thebibliography}{99}

\bibitem{Billingsley:BORVOL}
P.~Billingsley,
{\it Convergence of probability measures} (Wiley Series in Probability and Math. Statistics,
John Wiley, New York, 1968).

\bibitem{Bystr:BORVOL}
I.~S.~Borisov and A.~A.~Bystrov,
``Limit theorems for the canonical von Mises statistics with dependent data,''
Sibirsk. Mat. Zh. {\bf 47} (6), 1205--1217 (2006) [Siberian Math. J. {\bf 47} (6), 980--989 (2006)].

\bibitem{doukhan:BORVOL}
P.~Doukhan,
{\it Mixing. Properties and Examples}
(Lecture Notes in Statistics {\bf 85}, Springer-Verlag, New~York, 1994).

\bibitem{eaglson:BORVOL}
G.~K.~Eagleson,
``Orthogonal expansions and $U$-statistics,''
Austral. J. Statist. {\bf 21} (3), 221--237 (1979).

\bibitem{GS:BORVOL}
I.~I.~Gikhman, A.~V.~Skorokhod, and M.~I.~Yadrenko,
{\it Theory of probability and mathematical statistics}
(Vyshcha shkola, Kiev, 1979) [in Russian].

\bibitem{Hf:BORVOL}
W.~Hoeffding,
``A~class of statistics with asymptotically normal distribution,''
Ann. Math. Statist. {\bf 19}, 293--325 (1948).

\bibitem{Ibragim:BORVOL}
I.~A.~Ibragimov and Yu.~V.~Linnik,
{\it Independent and stationary sequences of random variables} (Nauka, Moscow, 1965) [Noordhoff, Groningen, 1971].



\bibitem{Kolmogorov:BORVOL}
A.~N.~Kolmogorov and S.~V.~Fomin,
{\it Elements of the~theory of functions and of functional analysis}
(Second edition, Nauka, Moscow, 1968)
[Dover Publications, 1999].

\bibitem{KB:BORVOL}
V.~S.~Korolyuk and Yu.~V.~Borovskikh,
{\it Theory of U-Statistics} (Naukova Dumka, Kiev, 1989) [Kluwer Academic Publ., Dordrecht, 1994].

\bibitem{Mises:BORVOL}
R.~Von~Mises,
``On~the asymptotic distribution of differentiable statistical functions,''
Ann. Math. Statist. {\bf 18}, 309--348 (1947).

\bibitem{rubvit:BORVOL}
H.~Rubin and R.~Vitale,
``Asymptotic distribution of symmetric statistics,''
Ann. Statist. {\bf 8} (1), 165--170 (1980).

\bibitem{tikh:BORVOL}
A.~N.~Tikhomirov,
``On the accuracy of normal approximation of the probability of hitting a ball of sums of weakly dependent Hilbert Space valued random variables~I,''
Teor. Veroyatnost. i Primenen. {\bf 36} (4), 699-710 (1991) [Theory Probab. Appl.
{\bf 36} (4), 738--751 (1991)].

\bibitem{utev:BORVOL}
S.~A.~Utev,
``Sums of random variables with $\varphi$-mixing,''
Trudy Inst. Mat. {\bf 13} {\it Asymptotic analysis of the stochastic process distributions},
78--100 (1989) [Sib. Adv. Math {\bf 1} (3), 124--155 (1991)].



\end{thebibliography}
\end{document}